\date{September 8, 2009} 
\font\tengoth=eufm10 at 10pt
\font\sevengoth=eufm7 at 6pt
\newcommand{\g}{{\mathfrak g}}
\newcommand{\z}{{\mathfrak z}}
\newcommand{\fb}{{\mathfrak b}}
\newcommand{\fh}{{\mathfrak h}}
\newcommand{\fl}{{\mathfrak l}}
\newcommand{\fn}{{\mathfrak n}}
\newcommand{\fp}{{\mathfrak p}}
\newcommand{\fs}{{\mathfrak s}}
\newcommand{\fu}{{\mathfrak u}}
\renewcommand{\:}{\colon}
\newcommand{\1}{\mathbf{1}}
\newcommand{\0}{{\bf 0}}
\newcommand{\cA}{\mathcal{A}}
\newcommand{\cB}{\mathcal{B}}
\newcommand{\cL}{\mathcal{L}}
\newcommand{\cO}{\mathcal{O}}
\newcommand{\cH}{\mathcal{H}}
\renewcommand{\phi}{\varphi}
\newcommand{\dd}{{\tt d}}
\newcommand{\trile}{\trianglelefteq}
\newcommand{\subeq}{\subseteq}
\newcommand{\into}{\hookrightarrow}
\newcommand{\eps}{\varepsilon}
\def\onto{\to\mskip-14mu\to}
\newcommand{\N}{{\mathbb N}}
\newcommand{\Z}{{\mathbb Z}}
\newcommand{\C}{{\mathbb C}}
\newcommand{\K}{{\mathbb K}}
\newcommand{\bS}{{\mathbb S}}
\renewcommand{\hat}{\widehat}
\renewcommand{\tilde}{\widetilde}
\renewcommand{\L}{\mathop{\bf L{}}\nolimits}
\newcommand{\GL}{\mathop{{\rm GL}}\nolimits}
\newcommand{\SL}{\mathop{{\rm SL}}\nolimits}
\newcommand{\SO}{\mathop{{\rm SO}}\nolimits}
\newcommand{\U}{\mathop{\rm U{}}\nolimits}
\newcommand{\Sp}{\mathop{{\rm Sp}}\nolimits}
\newcommand{\gl}  {\mathop{{\mathfrak{gl} }}\nolimits}
\newcommand{\fsl} {\mathop{{\mathfrak{sl} }}\nolimits}
\newcommand{\su}  {\mathop{{\mathfrak{su} }}\nolimits}
\newcommand{\Ad}{\mathop{{\rm Ad}}\nolimits}
\newcommand{\Aut}{\mathop{{\rm Aut}}\nolimits}
\newcommand{\Der}{\mathop{{\rm Der}}\nolimits}
\newcommand{\id}{\mathop{{\rm id}}\nolimits}
\renewcommand{\dim}{\mathop{{\rm dim}}\nolimits}
\newcommand{\Spann}{\mathop{{\rm span}}\nolimits}
\newcommand{\ev}{\mathop{{\rm ev}}\nolimits}
\newcommand{\oline}{\overline}
\newcommand{\res}{\vert}
\newcommand{\ssssarr}{\hbox to 15pt{\rightarrowfill}}
\newcommand{\sssarr}{\hbox to 20pt{\rightarrowfill}}
\newcommand{\ssarr}{\hbox to 30pt{\rightarrowfill}}
\newcommand{\sarr}{\hbox to 40pt{\rightarrowfill}}
\newcommand{\arr}{\hbox to 60pt{\rightarrowfill}}
\newcommand{\larr}{\hbox to 60pt{\leftarrowfill}}
\newcommand{\Arr}{\hbox to 80pt{\rightarrowfill}}
\def\theoremname{Theorem}
\def\propositionname{Proposition}
\def\corollaryname{Corollary}
\def\lemmaname{Lemma}
\def\remarkname{Remark}
\def\conjecturename{Conjecture} 
\def\definitionname{Definition}
\def\exercisename{Exercise}
\def\examplename{Example}
\def\examplesname{Examples}
\def\problemname{Problem}
\def\problemsname{Problems}
\def\proofname{Proof}
\def\satzname{Satz} 
\def\koroname{Korollar}
\def\folgname{Folgerung}
\def\bemerkname{Bemerkung}
\def\aufgname{Aufgabe}
\def\beisname{Beispiel}
\def\beissname{Beispiele}
\def\bewname{Beweis}
\def\@thmcounter#1{\noexpand\arabic{#1}}
\def\@thmcountersep{}
\def\@begintheorem#1#2{\it \trivlist \item[\hskip 
\labelsep{\bf #1\ #2.\quad}]}
\def\@opargbegintheorem#1#2#3{\it \trivlist
      \item[\hskip \labelsep{\bf #1\ #2.\quad{\rm #3}}]}
\newtheorem{theor}{\theoremname}[section]
\newtheorem{propo}[theor]{\propositionname}
\newtheorem{coro}[theor]{\corollaryname}
\newtheorem{lemm}[theor]{\lemmaname}
\newenvironment{thm}{\begin{theor}\it}{\end{theor}}
\newenvironment{theorem}{\begin{theor}\it}{\end{theor}}
\newenvironment{prop}{\begin{propo}\it}{\end{propo}}
\newenvironment{proposition}{\begin{propo}\it}{\end{propo}}
\newenvironment{lem}{\begin{lemm}\it}{\end{lemm}}
\newenvironment{lemma}{\begin{lemm}\it}{\end{lemm}}
\newtheorem{rema}[theor]{\remarkname}
\newenvironment{remark}{\begin{rema}\rm}{\end{rema}}
\newenvironment{rem}{\begin{rema}\rm}{\end{rema}}
\newtheorem{stepnow}[theor]{}
\newtheorem{defin}[theor]{\definitionname} 
\newenvironment{defn}{\begin{defin}\rm}{\end{defin}}
\newtheorem{exerc}[theor]{\exercisename}
\newtheorem{exa}[theor]{\examplename}
\newtheorem{exas}[theor]{\examplesname}
\newenvironment{exs}{\begin{exas}\rm}{\end{exas}}
\newtheorem{conj}[theor]{\conjecturename}
\newtheorem{pro}[theor]{\problemname}
\newtheorem{prs}[theor]{\problemsname}
\newcommand{\qed}{{\unskip\nobreak\hfil\penalty50\hskip .001pt \hbox{}
          \nobreak\hfil
          \vrule height 1.2ex width 1.1ex depth -.1ex
           \parfillskip=0pt\finalhyphendemerits=0\medbreak}\rm}
\qed\end{trivlist}}
\newenvironment{proof}{\begin{trivlist}\item[\hskip%
\labelsep{\bf\proofname.\quad}]}%
{\hfill\qed\end{trivlist}}
\newenvironment{prf}{\begin{trivlist}\item[\hskip%
\labelsep{\bf\proofname.\quad}]}%
{\hfill\qed\end{trivlist}}
\newenvironment{Proof*}{\begin{trivlist}\item[\hskip%
\labelsep{\bf\proofname.\quad}]}%
{\end{trivlist}}
\newcommand{\pmat}[1]{\begin{pmatrix} #1 \end{pmatrix}}
\qed\end{trivlist}}
\newenvironment{beweis*}{\begin{trivlist}\item[\hskip%
\labelsep{\bf\bewname.\quad}]}%
{\end{trivlist}}
\newtheorem{satzn}[theor]{\satzname}
\newtheorem{koro}[theor]{\koroname}
\newtheorem{folg}[theor]{\folgname}
\newtheorem{bem}[theor]{\bemerkname}
\newtheorem{aufg}[theor]{\aufgname}
\newtheorem{aufgn}[theor]{\aufgname}
\newtheorem{beis}[theor]{\beisname}
\newtheorem{beiss}[theor]{\beissname}
\renewcommand{\1}{\mathbf{1}}
\renewcommand{\cL}{\mathcal{L}}
\newcommand{\cI}{\mathcal{I}}
\renewcommand{\cH}{\mathcal{H}}
\newcommand{\p}{\mathfrak{p}}
\begin{document} 

\title{Borel--Weil Theory for Groups over Commutative 
Banach Algebras} 

\author
{Karl-Hermann
Neeb\footnote{Technische Universit\"at Darmstadt,
Schlossgartenstrasse 7, D-64289 Darmstadt, Germany, 
neeb@mathematik.tu-darmstadt.de},
Henrik Sepp\"anen\footnote{Technische Universit\"at Darmstadt,
Schlossgartenstrasse 7, D-64289 Darmstadt, Germany, 
seppaenen@mathematik.tu-darmstadt.de. The second author was supported
by a post-doctoral fellowship from the Swedish Research Council.}}

%

\maketitle

\begin{abstract} Let $\cA$ be a commutative unital 
Banach algebra, $\g$ be a semisimple complex Lie algebra 
and $G(\cA)$ be the $1$-connected Banach--Lie group 
with Lie algebra $\g \otimes \cA$. 
Then there is a natural concept of a parabolic subgroup 
$P(\cA)$ of $G(\cA)$ and we obtain 
generalizations $X(\cA) := G(\cA)/P(\cA)$ of the generalized flag manifolds. 
In this note we provide an explicit description of all 
homogeneous holomorphic line bundles over $X(\cA)$ 
with non-zero holomorphic sections. In particular, we show that 
all these line bundles are tensor products of pullbacks 
of line bundles over $X(\C)$ by evaluation maps. 

For the special case where $\cA$ is a $C^*$-algebra, our 
results lead to a complete classification of all irreducible 
involutive holomorphic representations of $G(\cA)$ on Hilbert spaces. \\
Keywords: Banach--Lie group, holomorphic vector bundle, 
holomorphic section, Borel--Weil Theorem \\ 
MSC2000: 22E65, 46G20
\end{abstract}

\section{Introduction} \label{sec:1} 

If $\g$ is a finite dimensional complex semisimple Lie algebra 
and $\cA$ is a unital commutative Banach algebra, then 
$\g(\cA) := \g \otimes \cA$ carries a natural Banach--Lie algebra 
structure with respect to the $\cA$-bilinear extension of the bracket. 
As we shall see below, there always exists a 
($1$-connected) Banach--Lie group $G(\cA)$ with Lie algebra 
$\g(\cA)$. For any parabolic subalgebra $\fp \subeq \g$,  
we then obtain a connected Banach--Lie subgroup 
$P(\cA)$ with Lie algebra $\fp(\cA) := \fp \otimes \cA \subeq \g(\cA)$, 
which leads to the complex homogeneous spaces $X(\cA) := G(\cA)/P(\cA)$ 
generalizing the finite dimensional complex flag manifolds. 

In \cite{MNS09} we have studied homogeneous vector bundles 
over a class of Banach manifolds generalizing 
those of the form $G(\cA)/P(\cA)$. 
Some of the main results of that paper are 
that for each holomorphic Banach representation $\rho \: P(\cA) \to \GL(E)$,  
the space of holomorphic sections of the associated bundle 
$G \times_\rho E$ always carries a natural Banach space structure 
turning it into a holomorphic $G(\cA)$-module and that every 
irreducible holomorphic $G(\cA)$-module embeds in such a space 
of holomorphic sections. 
These results constitute natural extensions of 
Borel--Weil theory for finite dimensional reductive complex Lie groups. 

In this paper we obtain a complete classification of all 
homogeneous holomorphic line bundles over $X(\cA)$ 
with non-zero holomorphic sections. In particular, we show that 
all these line bundles are tensor products of pullbacks 
of line bundles over the finite dimensional compact complex 
manifold $X(\C)$ by evaluation maps  
$\phi^X_\eta \: X(\cA) \to X(\C)$ induced by unital algebra 
homomorphisms $\eta \: \cA \to \C$. 

If, in addition, $\cA$ is a $C^*$-algebra, then the involution 
of $\cA$ and the Cartan involution on $\g$ can be combined to 
an involution on the Banach--Lie algebra $\g(\cA)$, which leads to an 
antiholomorphic involution $*$ on the corresponding Lie group 
$G(\cA)$. For these groups the natural class of representations 
are those holomorphic representations 
$\pi \: G(\cA) \to \GL(\cH)$ on complex Hilbert spaces $\cH$ 
which are compatible with the involution. On the unitary groups 
$$ \U(G(\cA)) := \{ g \in G(\cA) \: g^* = g^{-1}\} $$
they restrict to norm continuous unitary representation, 
from which they can be reconstructed by analytic extension 
(cf.\ \cite{Ne98}). We show that all irreducible representations 
of this kind can be realized in holomorphic homogeneous 
line bundles over $X(\cA)$, from which we then derive a 
complete classification. Surprisingly, it turns out that all 
the irreducible representations are actually finite dimensional 
and factor through multi-evaluation maps $G(\cA) \to G(\C^N) \cong G(\C)^N$. 

The structure of the present paper is as follows. 
In Section~\ref{sec:2} we explain our setup and collect some 
structural information on the Lie algebras $\g(\cA)$. 
Section~\ref{sec:3} is completely independent of our representation 
theoretic framework. Its main result is Theorem~\ref{holchar}, 
asserting that every multiplicative holomorphic map 
$\chi \: \cA \to \C$ on a unital commutative Banach algebra 
is a finite product of algebra homomorphisms. This observation 
is the key to our results on the characterization 
of the holomorphic line bundles $\cL_\chi \to X(\cA)$ 
associated to holomorphic characters $\chi \: P(\cA) \to \C^\times$. 
Theorem~\ref{big} provides a characterization of those characters 
$\chi$ for which $\cL_\chi$ has non-zero holomorphic sections,  
as those which are products of characters pulled back from 
dominant characters of $P(\C)$ by homomorphisms 
$\phi_\eta \: P(\cA) \to P(\C)$ induced by algebra homomorphisms 
$\eta \: \cA \to \C$. 
This theorem is proved in Section~\ref{sec:5} by showing first
that, for the special case of $\g = \fsl_2(\C)$, it follows from 
Theorem~\ref{holchar} and then deriving the general case by 
applying the $\fsl_2$-case to $\fsl_2$-subalgebras corresponding 
to simple roots of $\g$. In Section~\ref{sec:6} we 
apply all that to the special case where $\cA$ is a unital 
$C^*$-algebra. Finally, we show in Section~\ref{sec:7} that, 
in general, the space $\cO_\chi(G(\cA))$ of holomorphic sections  
of $\cL_\chi$ is not finite-dimensional, 
although the corresponding line bundle is a pullback of a finite dimensional 
one.

For $\cA = C^k(\bS^1)$, $k \in \N_0$, the groups 
$G(\cA)$ are variants of complex loop groups. It is well-known that, 
at least for $k = \infty$, these groups have interesting central extensions 
with a very rich representation theory (by unbounded operators) 
(cf.\ \cite{PS86}, \cite{Ne01}), 
so that our results can also be understood 
as a contribution to the description of those representations 
of the centrally extended groups which are trivial on the center. 

As a consequence of our main result, 
the space of holomorphic 
sections of $\cL_\chi$ contains a finite dimensional 
$G(\cA)$-invariant subspace whenever it is non-zero. In particular, 
this leads to a natural class of finite dimensional representations 
of groups of the type $G(\cA)$ that deserve the name 
{\it evaluation representations}. Finite dimensional 
representations of Lie algebras of the form 
$\g \otimes \cA$, $\cA$ a unital commutative algebra, 
are presently under active investigation from an algebraic 
point of view. 
In \cite{Se09} one finds a survey on this theory for the 
case where $\cA$ is an algebra of Laurent polynomials. 
For the larger class of Lie algebras of the form 
$(\g \otimes \cA)^\Gamma$, where 
$\cA$ is the algebra of regular functions on an affine variety 
and $\Gamma$ a finite group acting on $\g$ and $\cA$, the irreducible 
finite dimensional representations have recently been 
classified by Neher, Savage and Senesi (\cite{NSS09}). 

Also closely related to our setting is the notion 
of a Weyl module introduced in \cite{CP01}. 
These are the maximal finite dimensional 
modules of algebras of the form $\g \otimes \cA$ 
generated by eigenvectors of $\fb \otimes \cA$, where 
$\fb$ is a Borel subalgebra of $\g$. The connection 
to our context is as follows. For any line bundle 
$\cL_\chi$ as above, we can identify its space of 
holomorphic sections with a certain space $\cO_\chi(G(\cA))$ 
of holomorphic functions on $G(\cA)$, so that the evaluation 
$\ev_\1 \: \cO_\chi(G(\cA)) \to \C$ is a morphism of 
$P(\cA)$-modules. In particular, $\ev_\1$ can be viewed as a 
$P(\cA)$-eigenvector in the 
dual $G(\cA)$-module $\cO_\chi(G(\cA))^*$. 
If $\cO_\chi(G(\cA))$ is finite dimensional (which is in 
particular the case if $\cA$ is finite dimensional; 
cf.~\cite[Cor.~3.9]{MNS09}), then 
$\cO_\chi(G(\cA))^*$ is a finite dimensional $G(\cA)$-module 
generated by a $P(\cA)$-eigenvector. If the parabolic 
$P(\cA)$ is minimal, these dual modules are Weyl modules. 
Conversely, the description of $\cO_\chi(G(\cA))$ as 
coinduced modules on the Lie algebra 
level in \cite[Sec.~2]{MNS09} implies that, whenever one can 
translate between the analytic and the algebraic setting, 
Weyl modules can be realized as duals of finite dimensional 
$G(\cA)$-invariant spaces of holomorphic sections of some line 
bundle $\cL_\chi$. For recent results on the structure of 
Weyl modules we refer to \cite{FoLi07}, \cite{FL04}. 

\section{Preliminaries} \label{sec:2}

Let $\g$ be a finite dimensional complex semisimple Lie algebra, 
$\fh \subeq \g$ be a Cartan subalgebra, and $\Delta \subeq \fh^*$ be 
the corresponding root system, so that we have the root decomposition 
\begin{eqnarray*}
\g = \mathfrak{h} \oplus \bigoplus_{\alpha \in \Delta} \g_{\alpha}.
\end{eqnarray*}
We write $\check \alpha \in \fh$ for the coroot associated to 
$\alpha \in \Delta$, i.e., the unique element $\check \alpha
 \in [\g_\alpha, \g_{-\alpha}]$ with $\alpha(\check \alpha) = 2$. 
Fix a positive system $\Delta^+$, and let
$\Pi = \{ \alpha_1,\ldots, \alpha_r\}$ denote the corresponding 
simple roots. 

In the following $\cA$ always denotes a complex unital commutative 
Banach algebra. Then 
$\g(\mathcal{A}):=\g \otimes \mathcal{A}$, equipped with Lie bracket
defined by \begin{eqnarray*}
\left[x_1 \otimes a_1, x_2 \otimes _2\right]:=[x_1,x_2] \otimes a_1a_2
\end{eqnarray*}
is a Banach--Lie algebra with respect to the natural tensor product 
topology, for which $\g(\cA) \cong \cA^{\dim \g}$ as a Banach space. 
We consider $\g(\cA)$ as a Lie algebra over the algebra $\cA$, 
hence sometimes write $x \otimes a \in \g \otimes \cA$ also as $ax$. 
From the $\fh$-weight space decomposition 
\begin{eqnarray*}
\g(\mathcal{A})=(\mathfrak{h} \otimes \mathcal{A}) \oplus 
\bigoplus_{\alpha \in \Delta} (\g_{\alpha} \otimes \mathcal{A}), 
\end{eqnarray*}
we derive that $\g(\cA)$ is weakly $\Delta$-graded 
in the sense of \cite[Def.~1.1]{MNS09} because 
it contains $\g_\Delta := \g \otimes \1$ and we have 
the $\fh$-weight decomposition from above. 

To each subset $\Pi_\Sigma \subeq \Pi$, we associate a parabolic system of 
roots, defined by 
\begin{equation}
  \label{eq:parsyst}
\Sigma := (-\Delta^+) \cup (\Delta \cap 
\Spann_\Z (\Pi\setminus \Pi_\Sigma)). 
\end{equation}
If $x_\Sigma \in \fh$ is  such that 
$$ \alpha_i(x_\Sigma) = \begin{cases} 
0 & \text{for } \alpha_i \not\in \Pi_\Sigma \\ 
-1 & \text{for } \alpha_i \in \Pi_\Sigma,
\end{cases} $$
then 
$\Sigma=\{ \alpha \in \Delta \mid \alpha(x_\Sigma) \geq 0\}$.
Let $\fp:=\fp(\C) := 
\mathfrak{h} \oplus \bigoplus_{\alpha \in \Sigma} \g_{\alpha}$ 
be the parabolic subalgebra corresponding to $\Sigma$.
Then $\fp(\mathcal{A}):=\fp \otimes \mathcal{A}$ is called a 
{\it parabolic subalgebra} of
$\g(\mathcal{A})$.

The Lie-algebra $\g(\mathcal{A})$ integrates to a Banach--Lie group. 
In fact, if we choose some faithful representation $\g 
\rightarrow \mathfrak{gl}_n(\C)$, 
$\g(\mathcal{A})$ is a closed subalgebra of the Banach--Lie algebra $\mathfrak{gl}_n(\cA)$ of
$n \times n$-matrices with entries in $\mathcal{A}$. This Lie algebra integrates
to the Lie group $\GL_n(\mathcal{A})$ of all invertible matrices with entries in $\mathcal{A}$.
Hence $\g(\mathcal{A})$ is a closed Lie subalgebra of a Lie algebra 
of a linear Lie group, and therefore integrates to a Lie group 
(\cite{Mais62}). 
Let  $G(\mathcal{A})$, resp., $G(\C)$ be simply connected Banach--Lie groups
with Lie algebras $\g(\mathcal{A})$, resp., $\g$, and define 
Lie subgroups $P(\mathcal{A})$, resp., 
$P(\C)$ as the connected subgroups with
Lie algebras $\fp(\mathcal{A})$, resp., $\fp(\C) = \fp$. 

\begin{remark}\label{r:cparabolic}
(a) The Lie algebra $\p$ is a semidirect sum 
$\fp = \fu \rtimes \fl$, where 
$$ \mathfrak{l}=\mathfrak{h} \oplus \bigoplus_{\alpha(x_\Sigma)=0} \g_{\alpha} 
\quad \mbox{ and } \quad \mathfrak{u}=\bigoplus_{\alpha(x_\Sigma)>0} \g_{\alpha}.$$
Moreover, the subalgebra $\mathfrak{l}$ is a semidirect sum 
$\mathfrak{l}=\mathfrak{c} \ltimes \mathfrak{s}$, where
$$ \mathfrak{c}:=\Spann \check\Pi_\Sigma 
\quad \mbox{ and } \quad 
\mathfrak{s}:=[\fl, \fl] 
= \Big(\bigoplus_{\alpha(x_\Sigma)=0}\C \check{\alpha}\Big)
 \oplus  \bigoplus_{\alpha(x_\Sigma)=0}\g_{\alpha}.$$
Let $U:=\exp \mathfrak{u}$ and  
$L:=N_{P(\C)}(\mathfrak{l}) \cap N_{P(\C)}(\fu)$. 
Then the multiplication map $L \ltimes U \rightarrow P(\C)$, 
$(l,u) \mapsto lu$, is a holomorphic isomorphism. In particular, 
$L$ is connected because $P(\C)$ is connected. 

(b) We define the groups $C$ and $S$ as the integral subgroups
of $L$ with Lie algebras $\mathfrak{c}$ and $\mathfrak{s}$, respectively.
Let $H_S$ be the integral subgroup of $S$ with Lie algebra
$\fh_S := \fh \cap \fs$. Then the integral subgroup $H$ 
with Lie algebra $\fh$ satisfies 
$$H \cong \fh/\ker(\exp\res_\fh) 
= \fh/2\pi i \Z[\check \Delta] \cong (\C^\times)^r, $$
and this implies that the multiplication map 
$C \times H_S \to H$ is an isomorphism of abelian complex 
Lie groups. The product group $C\cdot S \subeq G$, 
being the integral subgroup with Lie algebra $\mathfrak{c} \oplus
\mathfrak{s}$, equals $L$. Hence the multiplication map
$C \times S \rightarrow L$ is surjective. 
To see that it is also injective, we note that its kernel is discrete
and normal, hence contained in the center 
$Z(C \times S) \subseteq C \times H_S$. The injectivity
now follows since the restriction of the multiplication to $C \times H_S$ 
is injective.
\end{remark}

Let $\mathfrak{n}(\mathcal{A}):=\sum_{\alpha \in \Delta \setminus
\Sigma} \mathfrak{g}_{\alpha} \otimes \mathcal{A},$ and let
$N(\mathcal{A}):=\exp(\mathfrak{n}(\cA))$ be the corresponding
integral subgroup of $G(\mathcal{A})$.
Recall from \cite[Prop. 1.11]{MNS09}  that the multiplication map
\begin{equation}\label{NPfact}
N(\mathcal{A}) \times P(\mathcal{A}) 
 \rightarrow  G(\mathcal{A}), \quad (n,p) \mapsto np 
\end{equation}
is biholomorphic onto an open subset. 
It follows that $P(\mathcal{A})$ is a complemented
Lie subgroup, so that the quotient space $X(\cA) := 
G(\mathcal{A})/P(\mathcal{A})$ is
a complex Banach manifold and the projection map 
$\pi_\cA: G(\mathcal{A}) \rightarrow X(\mathcal{A})$ is a holomorphic 
submersion defining a holomorphic $P(\cA)$-principal bundle. 

\section{Multiplicative holomorphic functions} \label{sec:3}

In this section we are concerned with 
holomorphic functions 
$\phi \: \cA \to \C$ on a commutative unital Banach algebra 
which are multiplicative, i.e., 
$\phi(ab) = \phi(a)\phi(b)$ for $a, b \in \cA$. 
Clearly, every algebra homomorphism has this property, 
and so does every finite product of algebra homomorphisms. 
The main result of this section (Theorem~\ref{holchar}) 
asserts the converse, 
namely that any such $\phi$ is a finite product of algebra 
homomorphisms. 

We start with a simple algebraic observation. 

\begin{lem}\label{lem:idealext}
Let $\Gamma$ be a finite group, and let 
$\sigma: \Gamma \rightarrow \Aut(\cA)$
be a representation of $\Gamma$ as automorphisms of 
a unital algebra $\cA$ over a field of characteristic zero. 
Let $\cA^{\Gamma}$ denote the
subalgebra of  $\Gamma$-invariants. Then every proper left ideal $I$ in 
$\cA^{\Gamma}$ generates a proper left ideal in $\cA$.
\end{lem}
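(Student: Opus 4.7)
The plan is to use the Reynolds averaging operator. Since the base field has characteristic zero, $|\Gamma|$ is invertible, so we may define
\[
E \: \cA \to \cA^\Gamma, \qquad E(a) := \frac{1}{|\Gamma|}\sum_{\gamma \in \Gamma}\sigma(\gamma)(a).
\]
I would first record the three properties of $E$ that drive the argument: (i) $E(\cA)\subeq \cA^\Gamma$ and $E\res_{\cA^\Gamma}=\id$; (ii) $E(1)=1$; (iii) $E$ is left $\cA^\Gamma$-linear, i.e.\ $E(ba) = bE(a)$ for $b\in\cA^\Gamma$, $a\in\cA$, which follows immediately from $\sigma(\gamma)(ba)=b\,\sigma(\gamma)(a)$.

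Next I would argue by contradiction. Suppose the left ideal $\cA I \subeq \cA$ generated by $I$ is not proper, so $1\in \cA I$ and there exist $a_1,\dots,a_n\in \cA$ and $i_1,\dots,i_n\in I$ with $1=\sum_{j=1}^n a_j i_j$. Applying $E$ and using properties (ii) and (iii) together with $i_j\in\cA^\Gamma$, I obtain
\[
1 = E(1) = \sum_{j=1}^n E(a_j i_j) = \sum_{j=1}^n E(a_j)\, i_j.
\]
Since $E(a_j)\in\cA^\Gamma$, the right-hand side lies in the left ideal $I$ of $\cA^\Gamma$, so $1\in I$, contradicting the assumption that $I$ is proper.

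There is essentially no obstacle here: the only subtlety is the left-$\cA^\Gamma$-linearity of $E$, which is automatic because the $i_j$ are $\Gamma$-fixed and can be pulled out of each summand $\sigma(\gamma)(a_j i_j)$. The characteristic-zero hypothesis is used solely to invert $|\Gamma|$ in the definition of $E$.
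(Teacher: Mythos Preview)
Your argument is correct and is essentially the same as the paper's: the Reynolds operator $E$ is precisely the projection onto the trivial isotypic component in the decomposition $\cA=\bigoplus_{\tau\in\hat\Gamma}\cA_\tau$ used there, and both proofs amount to observing that the $\cA^\Gamma$-component of $\1\in\cA I$ already lies in $\cA^\Gamma I\subeq I$. One small fix: in (iii) you state \emph{left} $\cA^\Gamma$-linearity $E(ba)=bE(a)$, but what you actually use is the \emph{right} version $E(a i)=E(a)\,i$ for $i\in\cA^\Gamma$; both are true by the same reasoning, so just restate (iii) accordingly.
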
 

\begin{proof}
Since the representation $\sigma$ of $\Gamma$ on $\cA$ is locally finite, we can
decompose $\cA$ as a finite direct sum
$\cA=\bigoplus_{\tau \in \hat{\Gamma}}\cA_{\tau},$ 
where $\widehat{\Gamma}$ denotes the set of irreducible representations of 
$\Gamma$, and $\cA_{\tau}$ is the $\tau$-isotypic component, i.e., 
the sum of all irreducible subrepresentations of $\sigma$ which are
equivalent to $\tau$. Observe that $\cA^{\Gamma}$ is the isotypic component
of the trivial representation.
For any $a_0 \in \cA^{\Gamma}$, the map $x \mapsto a_0x$ is $\Gamma$-equivariant,
so that $\cA^{\Gamma} \cA_{\tau} \subseteq \cA_{\tau}$
for any $\tau \in \hat{\Gamma}$.
Assume now that the left ideal $\cA I$ generated by $I$ is not proper, i.e, that
$\1 \in \cA 
I=\bigoplus_{\tau}\cA_{\tau}I$. Since $\1 \in \cA^{\Gamma}$, it follows
that $\1 \in \cA^{\Gamma}I \subseteq I$, which contradicts that $I$ is a proper ideal.
Hence $\cA I$ is a proper ideal.
\end{proof}

\begin{prop}\label{extendhom}
If $\cA$ is a unital commutative complex Banach algebra 
and $\Gamma \subeq \Aut(\cA)$ a finite subgroup, then any algebra homomorphism 
$\phi \: \cA^{\Gamma} \rightarrow \C$ extends to 
an algebra homomorphism $\tilde\phi: \cA \rightarrow \C$ and any such 
homomorphism is continuous.
\end{prop}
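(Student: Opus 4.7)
The plan is to treat the continuity and the extension claims separately. For continuity, I would invoke the standard Banach algebra fact that every maximal ideal in a unital commutative Banach algebra is automatically closed: the closure of a proper ideal cannot contain $\1$, since $\1$ admits an open neighborhood of invertible elements. Any nonzero unital algebra homomorphism $\psi\colon\cB\to\C$ has kernel of codimension one, hence maximal, hence closed, so $\psi$ is continuous. Applied to the extension $\tilde\phi$, this settles the continuity claim.

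For existence of the extension, I would set $I := \ker\phi$, which is a maximal ideal of $\cA^{\Gamma}$ of codimension one. By Lemma \ref{lem:idealext} (applied to the left ideal $I$), the ideal $\cA\cdot I$ generated by $I$ in $\cA$ is proper. By Zorn's lemma, $\cA\cdot I$ is contained in some maximal ideal $\cM\subseteq\cA$. Since $\cA$ is a unital commutative Banach algebra, the Gelfand--Mazur theorem applied to the Banach field $\cA/\cM$ yields $\cA/\cM\cong\C$, and the quotient map produces a character $\tilde\phi\colon\cA\to\C$ with kernel $\cM\supseteq I$.

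To verify $\tilde\phi\big|_{\cA^{\Gamma}}=\phi$, observe that $\cM\cap\cA^{\Gamma}$ is a proper ideal of $\cA^{\Gamma}$ containing the maximal ideal $I$, so $\cM\cap\cA^{\Gamma}=I$. Hence $\phi$ and $\tilde\phi\big|_{\cA^{\Gamma}}$ are unital characters of $\cA^{\Gamma}$ with the same codimension-one kernel, forcing them to coincide on the one-dimensional quotient $\cA^{\Gamma}/I$. The main obstacle in the argument is ensuring that $\cA\cdot I$ remains proper when we pass from $\cA^{\Gamma}$ to $\cA$; this is precisely the content of Lemma \ref{lem:idealext}, and the remainder is a routine combination of Gelfand--Mazur with Zorn's lemma.
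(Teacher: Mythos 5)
Your argument is correct and follows essentially the same route as the paper: both reduce the extension problem to Lemma \ref{lem:idealext} (properness of $\cA\cdot I$), pass to a maximal ideal, apply Gelfand--Mazur, and deduce agreement on $\cA^{\Gamma}=I\oplus\C\1$, with continuity coming from the standard closedness of maximal ideals. The only difference is that you spell out the standard Banach-algebra facts that the paper simply cites from Rudin.
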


\begin{prf} The kernel $I := \ker \phi$ is a proper ideal in the 
subalgebra $\cA^\Gamma$, so that the preceding lemma implies that 
$I$ is contained in a proper ideal of $\cA$. 
In particular, it is contained in a maximal ideal of $\cA$, so that 
the Gelfand--Mazur Theorem implies the existence 
of a (continuous) homomorphism $\tilde\phi \: \cA \to \C$ with 
$\ker \tilde\phi \cap \cA^\Gamma = \ker \phi$ (\cite[Thm.~11.5]{Ru91}). 
Since $\cA^\Gamma = I \oplus \C \1$, it follows that 
$\tilde\phi$ extends $\phi$. 
Its continuity follows from \cite[Thm.~11.10]{Ru91}. 
\end{prf}

\begin{theorem}\label{holchar}
Let $\cA$ be a unital commutative Banach algebra 
and $\varphi: \cA \rightarrow \C$ be a
holomorphic character of the multiplicative semigroup $(\cA, \cdot)$.
Then there exist finitely many continuous algebra homomorphisms 
$\chi_1, \ldots, \chi_n \: \cA \to \C$ such that
\begin{equation}
\varphi=\chi_1 \cdots \chi_n.
\end{equation}
\end{theorem}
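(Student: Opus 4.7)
My strategy is to exhibit $\varphi$ as induced by a character of the $\Sigma_n$-invariants of a tensor power of $\cA$, and then invoke Proposition \ref{extendhom}. The first step is to show that $\varphi$ is a continuous homogeneous polynomial of some integer degree $n \geq 0$. From $\varphi(\1)^2 = \varphi(\1)$ one has $\varphi(\1) \in \{0,1\}$, and $\varphi(\1) = 0$ forces $\varphi \equiv 0$; so assume $\varphi(\1) = 1$. For any $a \in \cA$, the holomorphic map $t \mapsto \varphi(\exp(ta))$ is a one-parameter subgroup of $\C^\times$, hence equals $e^{t\lambda(a)}$ for $\lambda := d\varphi_\1$. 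Applied to $a = \1$ with $s = e^t$ this gives $\varphi(s\1) = s^{\lambda(\1)}$ for $s \in \C^\times$; single-valuedness in $s$ forces $\lambda(\1) \in \Z$, and holomorphy at $s = 0$ then forces $n := \lambda(\1) \in \N_0$. Multiplicativity extends this to $\varphi(sa) = s^n \varphi(a)$ for all $a, s$, so $\varphi$ is $n$-homogeneous; its Taylor series at $0$ then collapses to a single term, so $\varphi$ is a continuous $n$-homogeneous polynomial. The case $n = 0$ gives $\varphi \equiv 1$ (empty product), so one may assume $n \geq 1$.

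Next, polarise to obtain a continuous symmetric $n$-linear form $\tilde\varphi \colon \cA^n \to \C$ with $\varphi(a) = \tilde\varphi(a, \ldots, a)$, and view it as a continuous linear functional $L \colon \cB \to \C$ on $\cB := (\cA^{\otimes n})^{\Sigma_n}$, the $\Sigma_n$-invariants in the projective tensor power of $n$ copies of $\cA$, characterised by $L(a^{\otimes n}) = \varphi(a)$. I claim $L$ is an algebra homomorphism. The multiplicativity of $\varphi$ gives
\[
L(a^{\otimes n} \cdot b^{\otimes n}) = L((ab)^{\otimes n}) = \varphi(a)\varphi(b) = L(a^{\otimes n})L(b^{\otimes n}),
\]
so the continuous bilinear form $(x,y) \mapsto L(xy) - L(x)L(y)$ on $\cB$ vanishes on all pairs of pure powers $\{a^{\otimes n} \colon a \in \cA\}$. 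The classical polarisation identity
\[
n! \, a_1 \odot \cdots \odot a_n = \sum_{\emptyset \ne S \subseteq \{1, \ldots, n\}} (-1)^{n-|S|} \Big(\sum_{i \in S} a_i\Big)^{\otimes n}
\]
shows that the linear span of these pure powers contains all algebraic symmetric tensors, which in turn are dense in $\cB$; bilinearity and continuity then force $L(xy) = L(x)L(y)$ on all of $\cB$.

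Finally, I apply Proposition \ref{extendhom} with $\Gamma = \Sigma_n$ acting on the commutative unital Banach algebra $\cA^{\otimes n}$ by permuting tensor factors: the continuous algebra homomorphism $L$ on $\cB = (\cA^{\otimes n})^{\Gamma}$ extends to a continuous algebra homomorphism $\tilde L \colon \cA^{\otimes n} \to \C$. Setting $\chi_i(a) := \tilde L(\1 \otimes \cdots \otimes a \otimes \cdots \otimes \1)$ with $a$ in the $i$-th slot defines continuous algebra homomorphisms $\chi_i \colon \cA \to \C$, and since $a^{\otimes n}$ is the product of these $n$ one-slot elements in $\cA^{\otimes n}$, the conclusion
\[
\varphi(a) = L(a^{\otimes n}) = \tilde L(a^{\otimes n}) = \chi_1(a) \cdots \chi_n(a)
\]
follows from the multiplicativity of $\tilde L$. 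The main obstacle I anticipate is the multiplicativity check for $L$, which rests on the polarisation identity together with density of the algebraic symmetric tensors in the projective symmetric tensor power; everything else is essentially bookkeeping once the degree $n$ has been pinned down.
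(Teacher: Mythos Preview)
Your proposal is correct and follows essentially the same route as the paper's proof: establish that $\varphi$ is a homogeneous polynomial of some degree $n$, lift it to a linear functional on the symmetric projective tensor power $S^n(\cA)=(\cA^{\otimes n})^{\Sigma_n}$, check multiplicativity there via polarisation and density of pure powers, extend to $\cA^{\otimes n}$ by Proposition~\ref{extendhom}, and read off the $\chi_i$ from the tensor slots. The only cosmetic difference is in how you pin down the degree $n$: you use a one-parameter subgroup argument for $t\mapsto\varphi(\exp(ta))$, whereas the paper directly observes that the holomorphic multiplicative map $z\mapsto\varphi(z\1)$ on $\C$ must be $z\mapsto z^n$; both arrive at the same homogeneity $\varphi(za)=z^n\varphi(a)$ and then use the Taylor expansion at the origin to conclude.
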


\begin{proof} We first claim that $\varphi$ is a 
homogeneous polynomial of some degree $n$, i.e., 
there exists a symmetric $n$-linear map 
$\tilde{\varphi}: \cA^n \rightarrow \C$ with 
$\phi(a) = \tilde\phi(a,\ldots,a)$ for every $a \in \cA$. 
Indeed, the map $\C \rightarrow \C, z \mapsto \varphi(z\1),$ is 
holomorphic and multiplicative, hence of the form $z \mapsto z^n$ 
for some $n \in \N_0$. From this, we get the homogeneity condition
\begin{eqnarray}
\varphi(za)=z^n\varphi(a) \quad \mbox{ for } \quad 
z \in \C, a \in \cA. \label{homo}
\end{eqnarray}
On the other hand, we have a power series expansion
\begin{eqnarray}
\varphi=\sum_{k=0}^{\infty}\varphi_k \label{taylor}
\end{eqnarray}
at the origin,
where $\varphi_k: \cA \rightarrow \C$ is a homogeneous 
polynomial of degree $k$.
Comparison of \eqref{homo} and \eqref{taylor} yields $\varphi=\varphi_n$.
We can thus write $\varphi(a)=\tilde{\varphi}(a, \ldots, a)$ 
for a continuous $n$-linear map $\tilde{\varphi}: \cA^n \rightarrow \C$.

Now let $\cA^{\otimes n}$ denote the projective $n$-fold 
tensor product of $\cA$, which is the completion of the algebraic 
tensor product with respect to the maximal cross norm. 
It has the universal property that continuous linear maps 
$\cA^{\otimes n} \to X$ to a Banach space are in one-to-one correspondence 
with continuous $n$-linear maps $\cA^n \to X$. 
From the universal property and the associativity 
of projective tensor products it  
easily follows that $\cA^{\otimes n}$ carries a natural 
unital commutative Banach algebra structure, determined by 
\begin{eqnarray*}
(a_1 \otimes \cdots \otimes a_n)(b_1 \otimes \cdots \otimes b_n)
:=a_1b_1 \otimes \cdots \otimes a_nb_n. 
\end{eqnarray*} 
The symmetric group $S_n$ acts by automorphisms on $\cA^{\otimes n}$ 
by the continuous linear extensions of the maps which permute
the tensor factors, i.e.
\begin{eqnarray*}
\sigma(a_1 \otimes \cdots \otimes a_n):=a_{\sigma^{-1}(1)} \otimes \cdots 
\otimes a_{\sigma^{-1}(n)}, \quad \sigma \in S_n.
\end{eqnarray*}
The fixed point algebra $S^n(\cA) := (\cA^{\otimes n})^{S_n}$ 
also is a unital Banach algebra. It 
is topologically generated by tensors of the form  
\begin{eqnarray*}
a_1 \vee \cdots \vee a_n:=\frac{1}{n!}
\sum_{\sigma \in S_n}a_{\sigma(1)} \otimes \cdots \otimes a_{\sigma(n)},
\end{eqnarray*}
and by polarization it is actually generated by the diagonal elements
$a \vee \cdots \vee a=a \otimes \cdots \otimes a$.

With the universal property of the Banach space $S^n(\cA)$, 
we find a continuous linear map $\psi \: S^n(\cA) \to \C$ 
with $\tilde\phi(a,\ldots, a) = \psi(a \otimes \cdots \otimes a)$ 
for $a \in \cA$. For the diagonal generators of $S^n(\cA)$ we now have 
\begin{eqnarray*}
\psi((a \otimes \cdots \otimes a)(b \otimes \cdots \otimes b))
&=&\psi(ab \otimes \cdots \otimes ab)=\phi(ab)=\phi(a)\phi(b)\\
&=&\psi(a \otimes \cdots \otimes a)\psi(b \otimes \cdots \otimes b).
\end{eqnarray*}
From the linearity of $\psi$ and its multiplicativity 
on a set of topological  linear generators of the Banach algebra 
$S^n(\cA)$, it now follows that $\psi$ is an algebra homomorphism. 
By Proposition~\ref{extendhom}, we can extend $\psi$ 
to an algebra homomorphism $\chi: \cA^{\otimes n} \rightarrow \C$.
Then
\begin{eqnarray*}
\phi(a)&=&\chi(a \otimes \cdots \otimes a)\\
&=&\chi(a \otimes \1 \otimes \cdots \1)\chi(\1 \otimes a \otimes \1 \otimes 
\cdots \otimes \1)
\cdots \chi(\1 \otimes \cdots \otimes \1 \otimes a)\\
&=&\chi_1(a)\cdots \chi_n(a), 
\end{eqnarray*}
where $\chi_i \: \cA \rightarrow \C$ is the character
\begin{eqnarray*}
\chi_i(a):=
\chi(\1 \otimes \cdots \otimes \1 \otimes a \otimes \1 \otimes \cdots 
\otimes \1)
\end{eqnarray*}
with $a$ occurring as the $i$th factor. 
Since every character of $\cA$ is automatically continuous 
(\cite[Thm.~11.10]{Ru91}), this proves the theorem.
\end{proof}

\begin{rem} The preceding theorem basically asserts that for the 
commutative Banach algebra $S^n(\cA)$, the natural map 
$$ (\cA^{\otimes n})\,\hat{} \cong \hat\cA^n \to  S^n(\cA)\,\hat{} $$
is surjective, 
and it is not hard to see that this leads to a topological isomorphism 
$$ S^n(\cA)\,\hat{} \cong \hat\cA^n/S_n, $$
where the symmetric group $S_n$ acts on $\hat\cA^n$ by permutations. 

It is an interesting problem to find a non-commutative analog of this 
result (for $C^*$-algebras) (cf.\ \cite{Ar87} for some results 
pointing in this direction). 
\end{rem}

\begin{rem} There is an interesting algebraic version of the 
preceding theorem which may also be of interest in other contexts. 
Let $\cA$ be a finitely generated unital commutative 
algebra over an algebraically closed field $\K$ of characteristic zero and 
$\phi \: \cA \to \K$ a non-zero multiplicative polynomial map. 
Then there exists an $n \in \N_0$ with 
$\phi(z\1) = z^n$ for all $z \in \K$ and 
algebra homomorphisms 
$\chi_1,\ldots, \chi_n \: \cA \to \K$ with 
$\phi = \prod_{i = 1}^n \chi_i.$

To verify this claim, we first consider the polynomial 
$\K \to \K, z \mapsto \phi(z\1)$. Since it is multiplicative and non-zero, 
it maps $\K^\times$ into $\K^\times$, so that it has no zero in $\K^\times$. 
This implies that $\phi(z\1) = z^n$ for some $n \in \N_0$ and all 
$z \in \K$. We conclude that $\phi$ is a homogeneous polynomial 
of degree $n$. 

Following the same line of argument as above, we have 
to extend a homomorphism 
$\psi \: S^n(\cA) \to \K$ to an algebra homomorphism 
$\cA^{\otimes n} \to \K$. In view of Lemma~\ref{lem:idealext}, 
this reduces to the problem to show that 
every maximal ideal $J$ of $\cA^{\otimes n}$ has the property that 
$\cA^{\otimes n}/J \cong \K$. Since $\cA$ is assumed to be finitely 
generated, the same holds for the quotient field 
of $\cA^{\otimes n}$, so that it is a quotient of some polynomial 
ring $\K[x_1,\ldots, x_N]$. Therefore the assertion follows from 
Hilbert's Nullstellensatz. 
\end{rem}

\section{Homogeneous line bundles} \label{sec:4}

Let $\chi: P(\mathcal{A}) \rightarrow \C^{\times}$ 
be a holomorphic character. 
We define the associated holomorphic homogeneous line bundle 
$$ \cL_\chi := (G(\cA) \times \C)/P(\cA) := G(\cA) \times_\chi \C$$ 
and write its elements  as $[g,v]$, which are the orbits for the 
$P(\cA)$-action on $G(\cA) \times \C$ by 
$p.(g,v) := (gp^{-1}, \chi(p)v)$. We identify the space 
of holomorphic sections of $\cL_\chi$ with 
\begin{align*}
&\cO_\chi(G(\cA)) \\
&:= \{ f \in \cO(G(\cA)) \: 
(\forall g \in G(\cA))(\forall p\in P(\cA))\, 
f(gp) = \chi(p)^{-1}f(g) \} 
\end{align*} 
by assigning to $f \in \cO_\chi(G(\cA))$ the section defined by 
$s_f(g P(\cA)) := [g, f(g)]$ (cf.\ \cite{MNS09}). 

Let $\widehat{\mathcal{A}}$ denote the space of all unital continuous 
algebra homomorphisms $\mathcal{A} \rightarrow \C$. We recall that this
is a compact space with respect to the weak-$*$- topology 
on the topological dual space $\mathcal{A}'$, and
that the Gelfand transform
$$ \mathcal{G}: \mathcal{A} \rightarrow  C(\hat{\mathcal{A}}),\quad 
a \mapsto  \hat{a}, \quad \hat{a}(\eta):=\eta(a)$$
is a homomorphism of Banach algebras (cf. \cite[Ch.~11]{Ru91}).

For any $\eta \in \hat{\mathcal{A}}$, we obtain 
a homomorphism of Lie algebras
\begin{eqnarray*}
\varphi_{\eta}: \g(\mathcal{A}) \rightarrow \g, \quad x \otimes a \mapsto \eta(a) x, 
\end{eqnarray*}
and, since $G(\cA)$ is $1$-connected, it integrates to a holomorphic 
homomorphism of complex Banach--Lie groups 
\begin{eqnarray*}
\phi_{\eta}^G: G(\mathcal{A}) \rightarrow G(\C).
\end{eqnarray*}
From  $\varphi_{\eta}(\p(\mathcal{A}))\subseteq \p(\C)$, we derive 
$\phi_{\eta}^G(P(\mathcal{A})) \subseteq P(\C)$ because $P(\mathcal{A})$ 
is connected by definition. 
Since the quotient map $\pi_\cA \: G(\cA) \to X(\cA)$ 
is a holomorphic submersion, $\phi_{\eta}^G$ thus induces a holomorphic map
\begin{eqnarray*}
\phi_{\eta}^X : X(\cA) = G(\mathcal{A})/P(\mathcal{A}) \rightarrow 
X(\C) = G(\C)/P(\C)
\end{eqnarray*}
such that the diagram 
\begin{eqnarray*}
\xymatrix{G(\mathcal{A}) \ar[r]^{\phi_{\eta}^G} \ar@{->>}[d]^{\pi_\cA}& G(\C) 
\ar@{->>}[d]^{\pi_\C}\\
X(\mathcal{A})  \ar[r]^{\phi_{\eta}^X}  & X(\C)}
\end{eqnarray*}
commutes.

\begin{rem} \label{r:charcparabolic}
If $\xi:P(\C) \rightarrow \C$ is a holomorphic character, then by the 
isomorphism $P(\C) \cong U \rtimes L \cong U \rtimes (S \rtimes C)$ 
(cf. Remark \ref{r:cparabolic}), and the
fact that $S$ is connected, semisimple, and $\mathfrak{u} \subseteq [\p(\C), \p(\C)]$,
it follows that $\xi$ is uniquely determined by its restriction 
to the subgroup $C$. Hence the group $\hat P(\C)$ of holomorphic characters 
$P(\C) \rightarrow \C^{\times}$ 
is generated by the characters of the form $\xi_\alpha$, where
$\L(\xi_\alpha)\res_\fh  = \omega_\alpha \in \fh^*$, 
$\alpha \in \Pi_\Sigma$, is the fundamental weight 
with $\omega_\alpha(\check\beta) =\delta_{\alpha,\beta}$ for 
$\beta \in \Pi$. For each $\xi \in \hat P(\C)$ we thus obtain 
$$ \xi = \prod_{\alpha \in \Pi_\Sigma} \xi_\alpha^{\L(\xi)(\check \alpha)}$$ 
and $\xi$ is dominant if and only if $\L(\xi)(\check \alpha) \geq 0$ 
holds for each $\alpha \in \Pi_\Sigma$, which in turn implies 
$\L(\xi)(\check \alpha) \geq 0$ for each $\alpha \in \Delta^+$. 

According to the classical Borel--Weil Theorem, 
in our convention  \eqref{eq:parsyst} for the positive system, 
$\xi$ is dominant 
if and only if the holomorphic line bundle $\cL_\xi$ over 
$X(\C)$ has non-zero holomorphic sections. 
\end{rem}

Assume now that $\chi$ is the pullback of a holomorphic character
$\xi$ of $P(\C)$ with respect to 
$\phi_{\eta}^G|_{P(\mathcal{A})}:P(\mathcal{A}) \rightarrow P(\C)$,
i.e., it is of the form $\chi=\xi \circ \phi_{\eta}^G|_{P(\mathcal{A})}$. 
Then the corresponding line bundle $\cL_{\chi}$ is the pullback
of the line bundle $\mathcal{L}_{\xi}$ over $X(\C)$ with respect
to $\phi_{\eta}^X$. If $\xi$ is dominant, we can
produce holomorphic sections of $\cL_{\chi}$ by pulling back
holomorphic sections to $\mathcal{L}_{\xi}$.
This proves one half of the following theorem. 

\begin{theorem}\label{big}
Let $G(\mathcal{A})$ be a $1$-connected Banach--Lie group 
with Lie algebra $\g(\cA)$, 
$P(\mathcal{A})$ a connected parabolic subgroup of $G(\cA)$, 
and $\chi: P(\mathcal{A}) \rightarrow \C^{\times}$ 
be a holomorphic character.
Then the line bundle $\cL_{\chi}$ over $X(\cA) 
= G(\mathcal{A})/P(\mathcal{A})$
has nonzero global holomorphic sections
if and only if there exist $\eta_1, \ldots, \eta_m 
\in \widehat{\mathcal{A}}$ and fundamental 
holomorphic characters $\xi_1, \ldots, \xi_m$ of $P(\C)$ such that
\begin{eqnarray}
\chi=\Pi_{j=1}^m (\phi_{\eta_j}^G)^*\xi_j. \label{goodchar}
\end{eqnarray}
This implies in particular that $\cL_\chi$ 
is the tensor product of line bundles of the form
$(\phi_{\eta_j}^X)^*\mathcal{L}_{\xi_j}$, where 
$\L(\xi_j)$ is a fundamental weight of $\g$. 
\end{theorem}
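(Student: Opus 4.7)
The ``if'' direction follows from the paragraph preceding the statement: each dominant fundamental character $\xi_j$ of $P(\C)$ yields a nonzero holomorphic section of $\cL_{\xi_j}$ over $X(\C)$ by classical Borel--Weil, and the tensor product of their pullbacks under $\phi^X_{\eta_j}$ is a nonzero section of $\cL_\chi$. For the converse, assume $0 \neq f \in \cO_\chi(G(\cA))$ and (replacing $f$ by a left translate) that $f(\1) \neq 0$. I would establish the required decomposition in two stages, as announced in the introduction: first when $\g = \fsl_2(\C)$ by invoking Theorem~\ref{holchar}, and then reduce the general case to this one by restriction to $\fsl_2$-subalgebras indexed by $\Pi_\Sigma$.

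\emph{The $\fsl_2$-case.} Here $P(\cA) = B(\cA)$. Since $\L(\chi)$ vanishes on $\g_{-\alpha}\otimes \cA \subseteq [\fb(\cA),\fb(\cA)]$, $\chi$ is determined by its restriction to $H(\cA) \cong (\cA,+)$ (the $1$-connected group of the abelian Lie algebra $\fh \otimes \cA$), which has the form $a \mapsto e^{\ell(a)}$ for some continuous $\ell \in \cA'$. Fix a Weyl representative $\omega \in G(\cA)$. For $a \in \cA^\times$ the Bruhat factorization gives $\omega\, n(a) = n(-a^{-1})\, p(a)$ with $n(a) := \exp(a e_\alpha)$ and $p(a) \in B(\cA)$. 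Setting $F(a) := f(n(a))$ and $\tilde F(a) := f(\omega\, n(a))$, both holomorphic on the Banach space $\cA$, the equivariance of $f$ yields
$$F(-a^{-1}) = \chi(p(a))\, \tilde F(a), \qquad a \in \cA^\times.$$
An explicit matrix computation shows that $p(a)$ lifts $\diag(-a^{-1},-a)$ modulo the unipotent radical, so $\chi(p(a))$ encodes the multi-valued expression $e^{-\ell(\log(-a))}$. The global holomorphy of $F$ and $\tilde F$ on all of $\cA$ then forces $a \mapsto e^{\ell(\log a)}$ to extend, from a multi-valued function on $\cA^\times$, to a single-valued holomorphic \emph{multiplicative} function $\varphi : \cA \to \C$. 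Theorem~\ref{holchar} supplies $\varphi = \chi_1\cdots \chi_m$ with $\chi_i \in \hat\cA$, and differentiating at $\1$ gives $\ell = \sum_i \chi_i$, equivalently $\chi = \prod_i (\phi_{\chi_i}^G)^* \xi_\alpha$.

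\emph{General $\g$ and reassembly.} For each $\alpha \in \Pi_\Sigma$, set $\fsl_2^\alpha := \C e_\alpha + \C \check\alpha + \C f_\alpha \subseteq \g$ and let $G^\alpha(\cA)$ be the integral Banach--Lie subgroup of $G(\cA)$ with Lie algebra $\fsl_2^\alpha \otimes \cA$ (passing to a $1$-connected cover if necessary). With $B^\alpha(\cA) := G^\alpha(\cA) \cap P(\cA)$ and $\chi^\alpha := \chi|_{B^\alpha(\cA)}$, the restriction $f|_{G^\alpha(\cA)}$ is a nonzero holomorphic section of $\cL_{\chi^\alpha}$; the $\fsl_2$-case yields characters $\eta^\alpha_1,\ldots,\eta^\alpha_{m_\alpha} \in \hat\cA$ with
$$\L(\chi)(\check\alpha \otimes a) = \sum_{j=1}^{m_\alpha} \eta^\alpha_j(a), \qquad a \in \cA.$$
Set $\chi' := \prod_{\alpha \in \Pi_\Sigma}\prod_j (\phi_{\eta^\alpha_j}^G)^* \xi_\alpha$. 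Using $\omega_\beta(\check\alpha) = \delta_{\alpha\beta}$, a direct computation gives $\L(\chi') = \L(\chi)$ on $\mathfrak{c}(\cA) = \Spann \check\Pi_\Sigma \otimes \cA$; both sides vanish on $\mathfrak{u}(\cA) \oplus \mathfrak{s}(\cA) \subseteq [\fp(\cA),\fp(\cA)]$, so they agree on all of $\fp(\cA)$, whence $\chi = \chi'$ by connectedness of $P(\cA)$, which is exactly \eqref{goodchar}.

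\emph{Main obstacle.} The crux is the $\fsl_2$-case: extracting from the global holomorphy of a section of $\cL_\chi$ a genuinely \emph{multiplicative} holomorphic function on $\cA$, so as to invoke Theorem~\ref{holchar}. In the classical case $\cA = \C$ this reduces to the observation that sections are polynomials of bounded degree; in the Banach setting one must carefully handle branches of $\log$ on $\cA^\times$ and exploit the simultaneous holomorphy of $F$ and $\tilde F$ across the complement of the big cell to glue a single-valued multiplicative extension of $e^{\ell(\log\cdot)}$.
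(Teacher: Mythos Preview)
Your overall architecture matches the paper's: the ``if'' direction is as you say, and for the converse the paper also first treats $\g = \fsl_2(\C)$ via Theorem~\ref{holchar} and then reduces the general case by restricting to $\fsl_2$-subalgebras attached to the simple roots in $\Pi_\Sigma$. Your reassembly argument in the general case is essentially the paper's.

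The genuine gap is the one you flag yourself as the ``main obstacle'' in the $\fsl_2$-case: you do not actually \emph{prove} that the multiplicative function extends holomorphically to all of $\cA$. The relation $F(-a^{-1}) = \chi(p(a))\,\tilde F(a)$ on $\cA^\times$ alone does not force this; the left-hand side involves $a^{-1}$, and there is no gluing argument that produces a single-valued holomorphic extension from the mere fact that $F$ and $\tilde F$ are entire on $\cA$. Something more is needed to kill the $a^{-1}$-dependence.

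The paper supplies two concrete ingredients you do not invoke. First, it uses \cite[Thm.~3.7]{MNS09} to pick not just any nonzero section but the \emph{$N(\cA)$-left invariant} one $f$ with $f(\1)=1$; this $f$ is then automatically also $U(\cA)$-right invariant since $\chi$ is trivial on $U(\cA)$. Second, it writes down an explicit four-factor product $\tilde h(z) \in G(\cA)$ lifting $\diag(z,z^{-1})$, arranged so that the outer two factors (the only ones containing $z^{-1}$) lie in $N(\cA)$ and $U(\cA)$ respectively. The invariances of $f$ therefore give
\[
f(\tilde h(z)) = f\big(\exp(e_\alpha)\exp((z-1)e_{-\alpha})\big),
\]
which is visibly holomorphic in $z \in \cA$, while Lemma~\ref{lem:hlift} identifies $f(\tilde h(\exp_\cA a))$ with $\chi(\exp(a\check\alpha))^{-1}$. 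This is exactly what produces the holomorphic multiplicative extension to which Theorem~\ref{holchar} applies. Your Bruhat approach is morally a version of this---$\tilde h(z)$ \emph{is} a Bruhat-type factorization---but without the $N(\cA)$-invariant section you cannot eliminate the inverse terms, and that is the entire content of the step.
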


\begin{rem} (a) If the group $G(\cA)$ is connected, but not simply connected, 
then the preceding theorem applies to the universal covering 
group $q \: \tilde G(\cA) \to G(\cA)$. 
If $\tilde P(\cA)$, resp., $P(\cA)$, denote the connected subgroups 
of $\tilde G(\cA)$, resp., $G(\cA)$ with Lie algebra $\fp(\cA)$, 
then we derive that $\cO_\chi(G(\cA)) \not=\{0\}$ implies that
the character $\tilde\chi := q^*\chi \: \tilde P(\cA) \to \C^\times$ 
is a product 
\begin{equation}\label{prod2} 
\tilde\chi=\Pi_{j=1}^m (\phi_{\eta_j}^G)^*\xi_j. 
\end{equation}
Since, in general, an algebra homomorphism 
$\eta_j \: \cA \to \C$ does not lead to a group 
homomorphism $G(\cA) \to G(\C)$, we have to face the difficulty 
to express the information directly with respect to the group 
$G(\cA)$. 

(b) However, if $G(\cA)$ is a functorially attached to 
$\cA$, such as the groups 
$\SL_n(\cA)_0$, $\Sp_{2n}(\cA)_0$ or $\SO_n(\cA)_0$, 
every algebra homomorphism $\eta \: \cA \to \C$ induces a 
morphism of Banach--Lie groups $\phi_\eta^G \: G(\cA) \to G(\C)$, 
regardless of whether $G(\cA)$ is simply connected or not. 
Then \eqref{prod2} implies that
$$ q^*\chi = \tilde\chi = q^*\prod_{j=1}^m (\phi_{\eta_j}^G)^*\xi_j, $$
which immediately leads to 
\begin{equation}
  \label{eq:prod3}
\chi =\prod_{j=1}^m (\phi_{\eta_j}^G)^*\xi_j 
\end{equation} 
because $q \: \tilde P(\cA) \to P(\cA)$ is surjective. 
\end{rem}

\begin{rem} \label{rem:findimsub} The preceding theorem implies in particular 
that if $\cL_\chi$ has non-zero holomorphic sections, 
then it is a tensor product of pullbacks of finite dimensional 
line bundles over $X(\C)$. Accordingly, the products of the 
pullbacks of the finite dimensional spaces of holomorphic 
sections of these line bundles over 
$X(\C)$ form a finite dimensional non-zero $G$-invariant subspace of 
$\cO_\chi(G(\cA))$. 
The results in \cite{MNS09} imply that 
this subspace is contained in every closed $G(\cA)$-invariant subspace. 
However, the $G(\cA)$-module structure on the Banach space 
$\cO_\chi(G(\cA))$ is far from being semisimple. 
As we shall see in Section~\ref{sec:7} below, 
the space $\cO_\chi(G(\cA))$ can be infinite dimensional, although 
it contains a finite dimensional minimal non-zero subspace. 
\end{rem}

\section{Proof of Theorem \ref{big}} \label{sec:5}

\subsection{The $\mathfrak{sl}_2$-case}
In this section we consider the special case 
$\g = \fsl_2(\C)$, $\g(\cA)=\mathfrak{sl}_2(\cA)$, where 
$G(\cA):=\tilde\SL_2(\mathcal{A})_0$  is the simply connected covering group 
of the identity component $\SL_2(\cA)_0$ of 
$\SL_2(\cA)$. We consider the parabolic subalgebra $\fp$ 
of upper triangular matrices in $\fsl_2(\C)$, and put 
$$ \fh = \C \check{\alpha}, 
\quad 
\check \alpha = \left(\begin{array}{cc}
1 & 0\\
0 & -1
\end{array}
\right),  \quad \Delta^+ = \{-\alpha\}, 
\quad \mbox{ and } \quad 
\fu = \C \pmat{ 0 & 1 \\ 0 & 0}. $$

\begin{defn} \label{def:hdef} For $z \in \cA^\times$, we define 
$\tilde h(z) \in G(\cA)$ by 
\begin{align*}
\tilde{h}(z)&:= \\
\exp&
\left(\begin{array}{cc}
0 & 0\\
z^{-1}-1 & 0 
\end{array}\right)
\exp
\left(\begin{array}{cc}
0 & 1\\
0 & 0 
\end{array}\right)
\exp
\left(\begin{array}{cc}
0 & 0\\
z-1 & 0 
\end{array}\right)
\exp
\left(\begin{array}{cc}
0 & -z^{-1}\\
0 & 0 
\end{array}\right),
\end{align*} 
and observe that $\tilde h(\1)= \1$.
If $q \: {G(\cA)} \to \SL_2(\cA)_0$ is the universal 
covering map with $\L(q) = \id_{\fsl_2(\cA)}$, then 
\begin{align*}
h(z) &:= q(\tilde h(z)) \\
=& \left(\begin{array}{cc}
1 & 0\\
z^{-1}-1 & 1 
\end{array}\right)
\left(\begin{array}{cc}
1 & 1\\
0 & 1 
\end{array}\right)
\left(\begin{array}{cc}
1 & 0\\
z-1 & 1 
\end{array}\right)
\left(\begin{array}{cc}
1 & -z^{-1}\\
0 & 1 
\end{array}\right)= \left(\begin{array}{cc}
z & 0\\
0 & z^{-1} 
\end{array}\right). 
\end{align*} 
\end{defn}

\begin{lemma} \label{lem:hlift}
For each $a \in \mathcal{A}$, we have 
$\widetilde{h}(\exp_\cA a)= \exp\left( \begin{array}{cc}
a & 0\\
0 & -a 
\end{array}\right),$ where 
$$ \exp_\cA \: \cA \to \cA^\times, \quad x \mapsto e^x = \sum_{n = 0}^\infty 
\frac{x^n}{n!} $$
is the exponential function of the Banach--Lie group $\cA^\times$.  
\end{lemma}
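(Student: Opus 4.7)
The plan is to compare the two maps $f_1, f_2 : \cA \to G(\cA)$ defined by
\[
f_1(a) := \tilde h(\exp_\cA a), \qquad f_2(a) := \exp\pmat{a & 0 \\ 0 & -a},
\]
and show they coincide by reducing the question to the base group $\SL_2(\cA)_0$ and then lifting back to the covering $G(\cA)$.

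First I would observe that both maps are well defined and continuous on all of $\cA$: the formula for $\tilde h(z)$ involves only the invertible element $z = e^a$ and its inverse $e^{-a}$, and each factor is a continuous (in fact holomorphic) map into $G(\cA)$; the right-hand side is the usual one-parameter subgroup $a \mapsto \exp(a\check\alpha \otimes \1)$, and note $f_1(0) = f_2(0) = \1$.

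The key computation is already recorded in Definition~\ref{def:hdef}: the image $q(\tilde h(z)) = h(z)$ equals $\pmat{z & 0 \\ 0 & z^{-1}}$ in $\SL_2(\cA)_0$, so
\[
q(f_1(a)) \;=\; h(e^a) \;=\; \pmat{e^a & 0 \\ 0 & e^{-a}}.
\]
On the other hand, since $\L(q) = \id_{\fsl_2(\cA)}$, we have $q \circ \exp_{G(\cA)} = \exp_{\SL_2(\cA)}$, and a direct computation with the diagonal matrix $\pmat{a & 0 \\ 0 & -a}$ (whose powers are themselves diagonal) yields
\[
q(f_2(a)) \;=\; \exp_{\SL_2(\cA)}\pmat{a & 0 \\ 0 & -a} \;=\; \pmat{e^a & 0 \\ 0 & e^{-a}}.
\]
Hence $q \circ f_1 = q \circ f_2$.

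Now I would consider the continuous map $d : \cA \to G(\cA)$, $d(a) := f_1(a) f_2(a)^{-1}$. By the previous paragraph $d$ takes values in $\ker q$, which is a discrete subgroup of $G(\cA)$ (indeed a quotient of $\pi_1(\SL_2(\cA)_0)$, and central). Since $\cA$ is connected and $d$ is continuous with values in a discrete set, $d$ is constant; evaluating at $a=0$ gives $d(0) = \1$. Therefore $f_1 = f_2$ on all of $\cA$, which is the claim. The only step requiring any care is checking that $q(f_1) = q(f_2)$, and this is the explicit matrix multiplication already displayed in the definition; no genuine obstacle arises.
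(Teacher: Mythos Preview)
Your proof is correct and follows essentially the same approach as the paper: both arguments show that $\tilde h\circ\exp_\cA$ and $a\mapsto\exp(a\check\alpha)$ are continuous lifts of the same map into $\SL_2(\cA)_0$ agreeing at $0$, and then conclude equality. The only difference is cosmetic---the paper invokes the uniqueness-of-lifts principle directly, whereas you unpack it via the connectedness argument with $d(a)=f_1(a)f_2(a)^{-1}$ taking values in the discrete kernel.
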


\begin{proof}
From $q(\widetilde{h}(\exp_\cA a))=h(\exp_\cA a)=\exp_{\SL_2(\cA)}\left( \begin{array}{cc}
a & 0\\
0 & -a 
\end{array}\right)$, 
we derive that $\widetilde{h} \circ \exp_\cA: 
\mathcal{A} \rightarrow G(\cA)$ is the
unique continuous lift of the map 
$$\cA\to \SL_2(\cA), \quad 
a \mapsto h(\exp_{\cA}(a)) = \exp_{\SL_2(\cA)}(a\check \alpha)$$ satisfying
$\widetilde{h}(\exp(0))=\boldsymbol{1}$. Since 
$a \mapsto \exp(a \check \alpha)$ is another lift with this 
property, the uniqueness of lifts implies the assertion.
\end{proof}

\begin{proposition}\label{extend2}
Let $\chi: P(\cA) \rightarrow \C^{\times}$ be a holomorphic character 
and observe that it defines a holomorphic character 
\begin{equation} \label{algchar2}
\chi_\cA : (\cA,+) \to \C^\times, \quad 
a \mapsto \chi\big(\exp(a \check \alpha)\big)^{-1}. 
\end{equation}
If the line bundle $\cL_{\chi}$ over ${G(\cA)}/P(\cA)$ admits nonzero 
holomorphic sections, then 
$\chi_\cA$ vanishes on the kernel $\ker(\exp_\cA)$ of the 
exponential function 
$$\exp_\cA \: \cA \to \cA^\times_0,$$ 
and 
induces a holomorphic character $\oline\chi_\cA \: \cA^\times_0 \to\C^\times$ 
which extends to a holomorphic character 
$(\mathcal{A},\cdot) \rightarrow \C$ of 
the multiplicative semigroup $(\mathcal{A},\cdot)$.
\end{proposition}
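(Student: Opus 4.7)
For part (i), the vanishing is in fact automatic from the construction. Evaluating the defining formula for $\tilde h$ at $z = \1$ collapses the outer factors and leaves $\tilde h(\1) = \exp(E_{12})\exp(-E_{12}) = \1$, while Lemma \ref{lem:hlift} identifies $\tilde h(\exp_\cA a) = \exp(a\check\alpha)$. For any $a \in \ker(\exp_\cA)$ these two expressions coincide, forcing $\exp(a\check\alpha) = \1$ in $G(\cA)$ and hence $\chi_\cA(a) = \chi(\1)^{-1} = 1$. Consequently $\chi_\cA$ descends to a well-defined holomorphic character $\bar\chi_\cA$ on $\cA^\times_0 = \exp_\cA(\cA)$ with $\bar\chi_\cA(z) = \chi(\tilde h(z))^{-1}$, and this descent is independent of the section hypothesis.

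For part (ii), set $n_-(a) := \exp(a E_{21}) \in N(\cA)$, $n_+(t) := \exp(t E_{12}) \in U(\cA) \subseteq P(\cA)$, and $F := f \circ n_- \colon \cA \to \C$. The plan is first to extract a key functional equation and then to disentangle $\bar\chi_\cA$ via a one-variable Laurent argument. I would establish the identity
\begin{equation*}
n_+(a)\, n_-(b) = n_-\bigl(b(1+ab)^{-1}\bigr)\, \tilde h(1+ab)\, n_+\bigl(a(1+ab)^{-1}\bigr) \qquad (1+ab \in \cA^\times)
\end{equation*}
in $G(\cA)$ by lifting the analogous $\SL_2(\cA)$-identity (both sides equal $\1$ at $(a,b) = (0,0)$), and then apply $f$; using that $\chi$ is trivial on $U(\cA)$ (because $\fu(\cA) = [\fp(\cA),\fp(\cA)]$), this yields
\begin{equation*}
f(n_+(a)\, n_-(b)) = \bar\chi_\cA(1+ab)\, F\bigl(b(1+ab)^{-1}\bigr),
\end{equation*}
whose left-hand side is holomorphic on all of $\cA \times \cA$.

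The decisive substitution is $b = c\1$, $a = c^{-1}(z - \1)$ with $c \in \C^\times$ and $z \in \cA$, giving $ab = z - \1$ and turning the functional equation into
\begin{equation*}
H_c(z) := f\bigl(n_+(c^{-1}(z-\1))\, n_-(c\1)\bigr) = \bar\chi_\cA(z)\, F(c z^{-1}) \qquad (z \in \cA^\times,\ c \in \C^\times).
\end{equation*}
The left-hand side is holomorphic on $\C^\times \times \cA$; for each fixed $z \in \cA^\times$ the right-hand side is entire in $c \in \C$ (since $c z^{-1} \in \cA$ and $F$ is holomorphic on all of $\cA$), so the Laurent expansion $H_c(z) = \sum_{k \in \Z} c^k L_k(z)$ has $L_k(z) = 0$ for $k < 0$ and $z \in \cA^\times$. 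The coefficient functions $L_k(z) = (2\pi i)^{-1}\oint_{|c|=r} H_c(z)\, c^{-k-1}\, dc$ are holomorphic on the whole of $\cA$ (contour integrals of a jointly holomorphic integrand over a fixed compact circle), and by uniqueness of holomorphic continuation on the connected Banach space $\cA$ the vanishing on the open set $\cA^\times$ propagates to $L_k \equiv 0$ on $\cA$ for every $k < 0$. Restricting to $\cA^\times$ one reads off $L_0(z) = \bar\chi_\cA(z)\, F(0)$.

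Replacing $f$ by the left-translate $g \mapsto f(g_0 g)$ for some $g_0$ with $f(g_0) \neq 0$ (which is again a section of $\cL_\chi$), we may assume $F(0) = f(\1) \neq 0$. Then $\tilde\chi_\cA(z) := L_0(z)/f(\1)$ is a holomorphic extension of $\bar\chi_\cA$ from $\cA^\times_0$ to all of $\cA$; multiplicativity on $\cA$ follows from multiplicativity on $\cA^\times_0$ by analytic continuation, since both sides of $\tilde\chi_\cA(zw) = \tilde\chi_\cA(z)\tilde\chi_\cA(w)$ are holomorphic on the connected space $\cA \times \cA$ and agree on the nonempty open subset $\cA^\times_0 \times \cA^\times_0$. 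Theorem \ref{holchar} then identifies $\tilde\chi_\cA$ as a finite product of algebra homomorphisms $\cA \to \C$. The main technical obstacles are the careful lift of the $\SL_2(\cA)$-identity to $G(\cA)$ in the second paragraph and the Laurent extraction in the third, which relies on holomorphic uniqueness on the Banach space $\cA$ and on the removability of the apparent singularity at $c = 0$.
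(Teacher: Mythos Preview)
Your argument is correct, but it takes a genuinely different route from the paper's proof. The paper invokes \cite[Thm.~3.7]{MNS09} to obtain a section $f \in \cO_\chi(G(\cA))$ that is \emph{$N(\cA)$-left invariant} with $f(\1)=1$; combined with the $U(\cA)$-right invariance (from $\fu(\cA)\subseteq[\fp(\cA),\fp(\cA)]$), the very definition of $\tilde h(z)$ as a four-fold product then gives
\[
f(\tilde h(z)) = f\big(\exp(E_{12})\exp((z-\1)E_{21})\big),
\]
which is visibly holomorphic on all of $\cA$ and agrees with $\bar\chi_\cA$ on $\cA^\times_0$. That is the entire extension argument --- no Bruhat identity, no Laurent extraction. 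Your approach instead works with an arbitrary section, and the price is the extra machinery: lifting the decomposition $n_+(a)n_-(b)=n_-(\cdot)\tilde h(1+ab)n_+(\cdot)$ to the simply connected cover and then isolating $\bar\chi_\cA$ via the Laurent coefficient $L_0$. What you gain is self-containment: you avoid the external input of an $\fn(\cA)$-invariant section. What the paper gains is brevity and transparency, since once that special section is in hand the extension is a one-line computation. Your observation that the vanishing of $\chi_\cA$ on $\ker(\exp_\cA)$ is automatic from $\tilde h(\1)=\1$ and Lemma~\ref{lem:hlift}, independent of any section hypothesis, is a genuine simplification over the paper, which derives this fact only through the section $f$. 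One small imprecision: in your functional equation and in the Laurent step you write $z\in\cA^\times$, but the identity $f(n_+(a)n_-(b))=\bar\chi_\cA(1+ab)F(b(1+ab)^{-1})$ is only available where $1+ab\in\cA^\times_0$ (so that $\tilde h(1+ab)\in P(\cA)$ via Lemma~\ref{lem:hlift}); restricting first to a ball $\|z-\1\|<1$ and then invoking analytic continuation on the connected set $\cA^\times_0$ fixes this without changing your argument.
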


\begin{proof} We identify the space of holomorphic sections of 
$\cL_\chi$ with the space \break $\mathcal{O}_{\chi}({G(\cA)})$ 
of holomorphic functions $f: {G(\cA)}\rightarrow \C$ which are 
equivariant for $P(\cA)$ 
in the sense that $f(gp)=\chi(p)^{-1}f(g)$ holds for 
$g \in {G(\cA)}$ and $p \in P(\cA)$. 
If this space is nonzero, then \cite[Thm.~3.7]{MNS09} implies 
the existence of an $\fn(\cA)$-invariant function $f \in \cO_\chi({G(\cA)})$ 
with $f(\1) = 1$. This implies that $f$ is $N(\cA)$-left invariant 
and hence in particular 
$f(N(\cA)) = \{1\}.$
Next we note that $\chi$ vanishes on $U(\mathcal{A})$ since 
$\mathfrak{u}(\mathcal{A}) \subseteq  [\fp(\cA), \fp(\cA)]$, so that 
$f$ is also $U(\cA)$-right invariant. Therefore 
$$ f(\tilde h(z)) = 
f\Big(\exp\left(\begin{array}{cc}
0 & 1\\
0 & 0 
\end{array}\right)
\exp
\left(\begin{array}{cc}
0 & 0\\
z-1 & 0 
\end{array}\right)\Big), $$
and the right hand side defines a holomorphic function 
on $\cA$. On the hand, Lemma~\ref{lem:hlift} 
implies that, for $a \in \cA$,  
$$ f(\tilde h(\exp_\cA a)) 
= f(\exp(a \check \alpha)) 
= \chi(\exp(a \check \alpha))^{-1} 
= \chi_\cA(a). $$
First, this proves that 
$\ker \exp_\cA \subeq \ker \chi_\cA$, so that 
$\chi_\cA$ factors through a holomorphic character 
$\oline\chi_\cA \: \cA^\times_0 = \exp_\cA(\cA) \to \C^\times$ 
with $\oline\chi_\cA \circ \exp_\cA = \chi_\cA$. 
For $z \in \cA^\times_0$, we now have 
$\oline\chi_\cA(z) = f(\tilde h(z))$, and we have just 
seen that this function extends to a holomorphic function 
on all of $\cA$. Since this function is multiplicative on all pairs 
in the open subset $\cA^\times_0$, it follows by 
analytic continuation that it is multiplicative. 
\end{proof}

We can now prove Theorem \ref{big} for  $\g=\mathfrak{sl}_2(\C)$.

\begin{proof} (of Theorem \ref{big} for $\g=\mathfrak{sl}_2(\C)$)
If $\cL_\chi$ has  nonzero 
holomorphic sections, then the preceding proposition 
implies the existence of a multiplicative holomorphic function 
$\oline\chi_\cA \: \cA \to \C$ with 
$$ \chi(\exp(a \check \alpha))^{-1} = \oline\chi_\cA(\exp_\cA a) 
\quad \mbox{ for } \quad a \in \cA. $$
Theorem~\ref{holchar} now implies that 
$\oline\chi_\cA = \eta_1 \cdots \eta_n$ for algebra 
homomorphisms \break $\eta_j \: \cA \to \C$, and this implies that 
$$ \chi(\exp(a \check \alpha))^{-1} = \prod_{j = 1}^n \eta_j(\exp_\cA a)
\quad \mbox{ for } \quad a \in \cA. $$
With the dominant fundamental character  (with respect to 
$\Delta^+ = \{-\alpha\}$) 
$$\xi \: P(\C) = \Big\{ 
\pmat{a & b \\ 0 & a^{-1}} \: a \in \C^\times, b \in \C\Big\} \to \C^\times, 
\quad \xi\Big(\pmat{a & b \\ 0 & a^{-1}}\Big) := a^{-1}, $$
 we now obtain
 \begin{align*}
\chi(\exp(a \check \alpha))
&= \prod_{j = 1}^n \eta_j(\exp_\cA a)^{-1}
= \prod_{j = 1}^n e^{-\eta_j(a)} 
= \prod_{j = 1}^n \xi(\exp_{\SL_2(\C)} \eta_j(a)\check \alpha)\\
&= \prod_{j = 1}^n \big((\phi_{\eta_j}^G)^*\xi\big)(\exp(a \check \alpha)).
 \end{align*}
As $P(\cA) = \exp(\cA \check \alpha) U(\cA)$ and $\chi$ vanishes on 
$U(\cA)$, this implies that 
\begin{equation}
  \label{eq:prodsl2}
\chi =  \prod_{j = 1}^n (\phi_{\eta_j}^G)^*\xi.
\end{equation}

The character $\xi$ satisfies $\L(\xi)(-\check \alpha) = 1$, 
hence it is dominant and fundamental. 
The corresponding line bundle $\mathcal{L}_{\xi}$ then admits nonzero
holomorphic sections by the classical Borel--Weil Theorem, 
and this implies that the line bundle 
$\cL_\chi \cong  \otimes_{j = 1}^n (\phi_{\eta_j}^G)^*\cL_\xi$
has non-zero holomorphic sections. 
\end{proof}

\begin{rem} The line bundle $\cL_\xi$ is
the bundle of hyperplane sections over the Riemann sphere. 
Its space of holomorphic sections is the two-dimensional 
fundamental representation of $\SL_2(\C)$ on the dual space 
of $\C^2$. 
\end{rem}

\subsection{The general case}

\begin{proof} (of Theorem \ref{big})
We have already seen that \eqref{goodchar} is sufficient 
for the existence of non-zero holomorphic sections. We now 
prove that it is also necessary.

For any simple root $\alpha \in \Pi$, 
consider the $\mathfrak{sl}_2(\C)$-subalgebra 
$$\g^\alpha := \C \check \alpha + \g_\alpha + \g_{-\alpha} 
\subeq \g \quad \mbox{ and } \quad 
\fp^\alpha := \C \check \alpha + \g_\alpha. $$
For the corresponding $1$-connected Banach--Lie groups, 
we then have morphisms 
$$ \gamma_\alpha^G \: G^\alpha(\cA) \to G(\cA) $$
integrating the inclusion maps $\g^\alpha(\cA) \into \g(\cA)$. 
Clearly, $\gamma_\alpha^G(P^\alpha(\cA)) \subeq P(\cA)$ 
holds for the corresponding connected parabolic subgroups 
$P^\alpha(\cA) \subeq G^\alpha(\cA)$. 

If $\cO_\chi(G(\cA))\not=\{0\}$, then, in view of the left 
invariance of this space, we also have that 
$$\{0\} \not= \gamma_\alpha^*\cO_\chi(G(\cA)) 
\subeq \cO_{\chi^\alpha}(G^\alpha(\cA)), 
\quad \mbox{ where } \quad 
\chi^\alpha := \chi \circ \gamma_\alpha^G\res_{P^\alpha(\cA)}. $$
From the $\fsl_2$-case we thus obtain algebra 
homomorphisms $\eta_1^\alpha,\ldots, \eta_{n_\alpha}^\alpha \in \hat\cA$ 
with 
$$ \chi^\alpha = \prod_{j = 1}^{n_\alpha} (\phi_{\eta_j^\alpha})^*\xi, $$
where $\phi_{\eta_j^\alpha} \: G^\alpha(\cA) \to \SL_2(\C)$ 
is the corresponding evaluation homomorphism. 

If $\iota_\alpha \: G^\alpha(\C) \cong \SL_2(\C) \to G(\C)$ is the 
homomorphism integrating the inclusion $\g^\alpha \to \g$, then 
the corresponding fundamental weight $\xi_\alpha \in \hat P(\C)$ 
satisfies $\iota_\alpha^*\xi_\alpha = \xi$ because 
$\L(\xi)(\check \alpha) =-1$. 
With  the evaluation homomorphisms 
$\phi_{\eta_j^\alpha}^G = \iota_\alpha \circ \phi_{\eta_j^\alpha} 
\: G^\alpha(\cA) \to G(\C)$, we thus obtain 
$$ \chi^\alpha = \prod_{j = 1}^{n_\alpha} 
(\phi_{\eta_j^\alpha}^G)^*\xi_\alpha. $$
Since 
$$\fp(\cA) = [\fp(\cA),\fp(\cA)] \oplus \bigoplus_{\alpha \in \Pi_\Sigma} 
\cA \check \alpha $$ 
(cf.\ Remark~\ref{r:cparabolic}), 
the restrictions to the subgroups $P^\alpha(\cA)$ of $P(\cA)$, 
$\alpha \in \Pi_\Sigma$, determine the character $\chi$ via 
$$ \chi\Big(\prod_{\alpha \in \Pi_\Sigma} \exp(a_\alpha \check \alpha)\Big) 
= \prod_{\alpha \in \Pi_\Sigma} \chi^\alpha(\exp(a_\alpha \check \alpha)) 
= \prod_{\alpha \in \Pi_\Sigma} \prod_{j = 1}^{n_\alpha} 
\chi_j(\exp a_\alpha)^{-1} $$
(cf. Remark \ref{r:charcparabolic}).
We conclude that $\chi$ is a product of pullbacks of 
dominant fundamental characters $\xi_\alpha$ by 
certain evaluation homomorphisms $\phi^G_\chi$, 
and this completes the proof. 
\end{proof}

\section{The case of $C^*$-algebras} \label{sec:6}

Theorem~\ref{holchar} provides a complete classification 
of all homogeneous holomorphic line bundles over 
the spaces $X(\cA)$ with non-zero holomorphic sections. 
The main reason for these bundles playing a role in the 
classical context $\cA = \C$ is that the Borel--Weil Theorem 
asserts that the corresponding spaces  
of holomorphic sections are always irreducible and 
that every irreducible finite dimensional holomorphic 
representation of $G(\C)$ can be realized in this way 
if $P(\C)$ is a Borel subgroup, i.e., $\Pi_\Sigma = \Pi$. 

For general commutative Banach algebras, one cannot expect 
such a sharp picture, as the examples discussed in \cite{NS09} show. 
Here the main source of the lacking semisimplicity of the 
representations lies in the algebra, the simplest examples arising 
for the two-dimensional 
algebra $\cA = \C[\eps]$ of dual numbers, where $\eps^2 = 0$. 
As we know from the Gelfand theory of commutative Banach algebras, 
commutative $C^*$-algebras are the prototype of commutative 
Banach algebras, and any semisimple commutative 
Banach algebra 
$\cA$ embeds continuously into the $C^*$-algebra $C(\hat\cA)$ by 
the Gelfand transform. 

In this section we therefore study the special case 
where $\cA$ is a commutative unital $C^*$-algebra. 
Let $\sigma \: \g \to \g$ be an involutive antilinear automorphism 
satisfying $\sigma(\g_\alpha) = \g_{-\alpha}$ for each 
root $\alpha$ and observe that this implies that $\sigma(\fh) = \fh$ 
and 
$$ \sigma(\check \alpha) = -\check \alpha \quad \mbox{ for } \quad 
\alpha \in \Delta. $$
We combine $\sigma$ with the 
involution $a \mapsto a^*$ of $\cA$ to an antilinear map 
$$ * \:  
\g(\cA) \to \g(\cA), \quad x \otimes a \mapsto -\sigma(x) \otimes a^*,$$
satisfying 
$$ (x^*)^* = x \quad \mbox{ and } \quad 
[x,y]^* = [y^*,x^*]\quad \mbox{ for } \quad x,y\in \g(\cA). $$
Thus $(\g(\cA),*)$ is an {\it involutive Banach--Lie algebra}. 
In view of the Gelfand isomorphism, we have 
$\cA \cong C(\hat\cA)$ and, accordingly, 
$\g(\cA) \cong C(\hat\cA,\g)$ with $f^*(\chi) = -\sigma(f(\chi))$. 
Since $G(\cA)$ was assumed to be simply connected, there exists an 
antiholomorphic involution $g \mapsto g^*$ on $G(\cA)$ satisfying 
$$ (gh)^* = h^*g^* \quad\mbox{ and } \quad 
(\exp x)^* = \exp(x^*)\quad \mbox{ for } \quad 
g,h \in G(\cA), x \in \g(\cA). $$

A {\it holomorphic involutive representation of $(G(\cA),*)$} is a pair 
$(\pi, \cH)$ consisting of a complex Hilbert space $\cH$ 
and a holomorphic homomorphism \break $\pi \: G(\cA) \to \GL(\cH)$ 
which is compatible with the involutions in the sense that  
$$ \pi(g^*) = \pi(g)^* \quad \mbox{ for } \quad g \in G(\cA). $$ 
Such a representation is said to be {\it irreducible} if 
$\cH$ contains no non-trivial $G(\cA)$-invariant closed subspace. 
We write $\dd\pi \: \g(\cA)\to B(\cH)$ for the derived representation 
of $\g(\cA)$ by bounded operators on $\cH$.

Assume that $(\pi,\cH)$ is an irreducible involutive holomorphic 
representation of $G(\cA)$ and 
$P(\cA)$ is a connected parabolic subgroup.  
Then \cite[Thm.~5.1]{MNS09} implies that 
$E := \cH/\oline{\fu(\cA)\cH}$ carries an irreducible holomorphic 
representation $\rho$ of $P(\cA)$ with $U(\cA) \subeq \ker \rho$, 
and the quotient map $\beta \: \cH \to E$ leads to   
an inclusion of holomorphic $G(\cA)$-representations 
$$ \beta_G \: \cH \to \cO_\rho(G(\cA),E), \quad 
\beta_G(v)(g) := \beta(\pi(g)^{-1}v), $$
where 
\begin{align*}
&\cO_\rho(G(\cA),E) \\
&:= \{ f \in \cO(G(\cA),E) \: 
(\forall g \in G(\cA))(\forall p\in P(\cA))\  
f(gp) = \rho(p)^{-1}f(g) \} 
\end{align*}
corresponds to the space of holomorphic sections of the associated 
holomorphic vector bundle $G(\cA) \times_\rho E$ over $X(\cA)$. 
The closure of the image of $\beta_G$ is the unique minimal 
closed $G(\cA)$-invariant subspace of $\cO_\rho(G(\cA),E)$. 

From the construction of the involution $*$ on $\g(\cA)$, we 
immediately derive that $\fu(\cA)^* = \fn(\cA)$ and 
$\fl(\cA)^* = \fl(\cA)$. In particular, the subgroup 
$L(\cA) \subeq G(\cA)$ is $*$-invariant, hence also carries the 
structure of a complex involutive Banach--Lie group. 
Since the subspace $\fu(\cA)\cH$ of $\cH$ is $L(\cA)$-invariant 
and $L(\cA)^* = L(\cA)$, the orthogonal complement 
$E \cong (\fu(\cA)\cH)^\bot$ is also $L(\cA)$-invariant, so that 
the representation $(\rho,E)$ of $L(\cA)$ actually 
is involutive.

If, in addition, $P(\cA)$ is minimal parabolic, 
then $\fl = \fh$ shows that $\fl(\cA)$ is abelian, so that 
Schur's Lemma implies that $\dim E = 1$, and thus 
$\rho \: P(\cA) \to \GL(E) \cong \C^\times$ is a holomorphic 
character and the results developed above apply. 
In particular, Remark~\ref{rem:findimsub} implies that 
the minimal $G(\cA)$-invariant subspace of 
$\cO_\rho(G(\cA),E)$ is finite-dimensional, so that 
$\dim \cH < \infty$. 
Further, the representation of 
$G(\cA)$ on the minimal submodule factors through some 
evaluation homomorphism 
$$ (\phi^G_{\eta_1}, \ldots, \phi^G_{\eta_m}) \: 
G(\cA) \to G(\C^m) \cong G(\C)^m, $$
where $\rho = \prod_{j = 1}^m (\phi_{\eta_j}^G)^*\xi_j$. 
To see how this information can be made compatible with the 
involution, we 
note that 
$\ker \pi \trile \g(\cA)$ is a $*$-invariant ideal. 
Hence it is in particular $\g$-invariant, and since 
$\g(\cA) \cong \g \otimes \cA$ is an isotypical semisimple 
$\g$-module, the closed $\g$-submodule $\ker \pi$ is of the form 
$\ker \pi = \g \otimes I$, where $I \subeq \cA$ is a 
closed $*$-invariant subspace. As $\ker \pi$ is an ideal, 
the relation 
$$ [x \otimes a, \ker \pi] 
= [x,\g] \otimes a I $$
implies that $I \trile \cA$ is an ideal. 
As $\dd\pi(\g(\cA))$ is finite dimensional, 
$\cA/I$ is a finite dimensional $C^*$-algebra, so 
that the Gelfand Representation Theorem implies that 
$\cA/I \cong \C^N$ as $C^*$-algebras. We conclude that 
$$ \pi(\g(\cA)) \cong \g \otimes \C^N \cong \g^N, $$ 
as involutive Lie algebras. Since $(\g,-\sigma)$ has an 
involutive Hilbert representation, the fixed point algebra 
$\g^\sigma$ is compact, so that $\sigma$  actually is a 
Cartan involution and $\g^\sigma$ is a compact real form.

We collect the result of the preceding discussion in the following  
theorem. 

\begin{thm} \label{thm:6} 
Let $(\cA,*)$ be a commutative $C^*$-algebra, 
$\sigma \in \Aut(\g)$ be an involutive automorphism 
with $\sigma(\g_\alpha) = \g_{-\alpha}$ for each $\alpha \in \Delta$ 
and define an involution on $\g(\cA)$ by 
$(x \otimes a)^* := - \sigma(x) \otimes a^*$. 
Let $(G(\cA),*)$ be the $1$-connected involutive Banach--Lie group 
corresponding to $(\g(\cA),*)$. 
Then every irreducible involutive representation 
$(\pi, \cH)$ of $G(\cA)$ is finite dimensional and factors through 
an involutive surjective multi-evaluation homomorphism 
$$ \phi^G_\eta \: G(\cA) \to G(\C)^N, \quad 
g \mapsto (\phi^G_{\eta_1}(g), \ldots, \phi^G_{\eta_N}(g)). $$
\end{thm}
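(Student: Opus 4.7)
The plan is to choose a minimal parabolic subgroup $P(\cA) \subeq G(\cA)$ (taking $\Pi_\Sigma = \Pi$, so that $\fl = \fh$) and combine the Borel--Weil type embedding from \cite[Thm.~5.1]{MNS09} with Theorem~\ref{big} and the structure theory of commutative $C^*$-algebras. Starting from an irreducible involutive holomorphic representation $(\pi,\cH)$, I would form the quotient $E := \cH/\overline{\fu(\cA)\cH}$, which by \cite[Thm.~5.1]{MNS09} carries an irreducible holomorphic $P(\cA)$-representation $\rho$ trivial on $U(\cA)$, together with a $G(\cA)$-equivariant embedding $\beta_G \: \cH \into \cO_\rho(G(\cA),E)$.

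The next step is to promote $\rho$ to an involutive representation. For this I would exploit that the involution on $\g(\cA)$ satisfies $\fu(\cA)^* = \fn(\cA)$ and $\fl(\cA)^* = \fl(\cA)$, so that $\fu(\cA)\cH$ is $L(\cA)$-invariant and its orthogonal complement realizes $E$ as an involutive $L(\cA)$-module. Minimality of $P(\cA)$ makes $\fl(\cA) = \fh(\cA)$ abelian, and Schur's Lemma then forces $\dim E = 1$, so $\rho$ is a holomorphic character of $P(\cA)$.

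At this point Theorem~\ref{big} gives a product decomposition $\rho = \prod_{j=1}^m (\phi_{\eta_j}^G)^* \xi_j$ in terms of dominant fundamental characters of $P(\C)$, and Remark~\ref{rem:findimsub} guarantees that the minimal closed $G(\cA)$-invariant subspace of $\cO_\rho(G(\cA),E)$ is finite dimensional. Irreducibility of $(\pi,\cH)$ identifies the image of $\beta_G$ with this minimal subspace, yielding $\dim \cH < \infty$. To extract the factorization, I would observe that $\ker(\dd\pi) \trile \g(\cA)$ is a closed $*$-invariant ideal and, as a $\g$-submodule of the $\g$-isotypical module $\g \otimes \cA$, necessarily has the form $\g \otimes I$ for a closed $*$-invariant subspace $I \subeq \cA$; the ideal property of $\ker(\dd\pi)$ then forces $I \trile \cA$ to be a closed $*$-ideal. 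The quotient $\cA/I$ is a finite dimensional commutative $C^*$-algebra, hence $\cA/I \cong \C^N$ by Gelfand--Naimark, and the $N$ coordinate projections provide characters $\eta_1, \ldots, \eta_N \in \hat\cA$ that assemble via the functoriality of the $\SL_n$-embedding into the desired surjective multi-evaluation homomorphism $\phi^G_\eta \: G(\cA) \to G(\C)^N$.

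The main obstacle, as I see it, is ensuring that every reduction respects the $*$-structure: specifically that $E = (\fu(\cA)\cH)^\perp$ genuinely carries an involutive $L(\cA)$-representation (so Schur's lemma applies in the Hilbert-space sense) and that the ideal $I \trile \cA$ extracted at the end is $*$-invariant, since it is this last property that lets Gelfand--Naimark force $\cA/I \cong \C^N$ rather than merely some finite dimensional commutative Banach algebra. Once those compatibilities are verified, the rigidity of finite dimensional commutative $C^*$-algebras and the classification in Theorem~\ref{big} close the argument.
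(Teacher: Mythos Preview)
Your proposal is correct and follows essentially the same argument as the paper's discussion preceding the theorem: the same minimal parabolic, the same use of \cite[Thm.~5.1]{MNS09} to obtain a one-dimensional $E$ via Schur's Lemma, the same appeal to Theorem~\ref{big} and Remark~\ref{rem:findimsub} for finite dimensionality, and the same extraction of the $*$-ideal $I \trianglelefteq \cA$ with $\cA/I \cong \C^N$. The only cosmetic difference is that the multi-evaluation homomorphism exists simply because $G(\cA)$ is $1$-connected (so the Lie algebra map $\g(\cA) \to \g(\cA/I) \cong \g^N$ integrates), rather than via an $\SL_n$-embedding.
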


\begin{rem} From the preceding theorem we can now easily derive a 
description of all irreducible involutive 
representations of $G(\cA)$. Since every finite dimensional involutive 
representation is a direct sum of irreducible ones, this implies a 
classification of all finite dimensional ones. 

As every irreducible involutive representation 
factors through an involutive 
representation of some group $G(\C)^N$, the classification 
problem reduces to a description of all irreducible holomorphic 
involutive representations of this group. In view of Weyl's Unitary Trick, 
this is equivalent to the classification of irreducible 
unitary representations of the maximal compact subgroup
$$  \{ g = (g_1,\ldots, g_N) \in G(\C)^N\: 
(\forall j)\, \sigma_G(g_j) = g_j\} \cong K^N, $$
where $\sigma_G$ is the antiholomorphic involution of $G(\C)$ with 
$\L(\sigma_G) = \sigma$ and $K := G(\C)^\sigma$. 
As all representations of this product group are tensor products 
of irreducible representations of the factor groups $K$, 
their classification follows from the Cartan--Weyl classificaton 
in terms of highest weight modules of $\g(\C)$. 
\end{rem}

\begin{rem} (a) The preceding discussion applies in particular 
to the universal covering $G(\cA) = \tilde\SL_n(\cA)_0$ of the group 
$\SL_n(\cA)_0$ for a commutative unital $C^*$-algebra, 
if we take $\g = \fsl_n(\C)$. Then the factorization of the 
representation through some $G(\C)^N = \SL_n(\C)^N$ 
even implies that all irreducible involutive  
representations of $G(\cA)$ factor through $\SL_n(\cA)_0$ because
the evaluation homomorphisms to $\SL_n(\C)^N$ have this property. 

(b) The techniques developed above apply to groups of the 
form $G(\cA)$, where $\g(\cA) = \g \otimes \cA$ and 
$\g$ is semisimple. If we want to extend the results 
to reductive Lie algebras $\g$, 
we observe that, in this case,  
$$ \g(\cA) = (\z(\g)\otimes \cA) \oplus \g'(\cA), $$
where $\g' = [\g,\g]$ is the commutator algebra and 
$\z(\g) \otimes \cA = \z(\g(\cA))$ is central. 

In view of Schur's Lemma, all irreducible 
involutive holomorphic representations 
$(\pi,\cH)$ of 
$$ G(\cA) \cong Z(G(\cA))_0 \times G'(\cA) $$ 
have the property that 
$\pi(Z(G(\cA))) \subeq \C^\times \1$, so that 
$\pi\res_{G'(\cA)}$ is also irreducible. Therefore the 
classification in the reductive case splits into 
the classification in the semisimple case and the 
description of the holomorphic characters of 
$$ Z(G(\cA))_0 \cong \z(\g) \otimes \cA = \z(\g(\cA)), $$
which can be identified with the elements in the dual space 
$\z(\g)^* \otimes \cA'$, which are invariant under the canonical 
extension of~$*$. 

For $\g = \gl_n(\C)$ we have $\z(\g) = \C$, so that 
the holomorphic characters of $Z(G(\cA))_0$ correspond to 
arbitrary hermitian functionals $\alpha \: \cA \to \C$. 
In particular, these functionals do not have to 
factor through evaluation maps. A typical example is 
the Riemann integral $I(f) = \int_0^1 f(x)\, dx$ 
on the commutative $C^*$-algebra $\cA = C([0,1])$. 
\end{rem}

\section{An infinite dimensional space of sections} \label{sec:7}

We have seen in Theorem~\ref{big} that, whenever the space 
$\cO_\chi(G(\cA))$ is nonzero for a holomorphic characters 
$\chi \: P(\cA) \to \C^\times$, then the corresponding line bundle 
$\cL_\chi \to X(\cA)$ is a product of pullbacks of line bundles  
$\cL_\xi \to X(\C)$. Since the corresponding space $\cO_\xi(G(\C))$ 
of holomorphic sections is finite dimensional, we obtain a 
$G(\cA)$-invariant non-zero finite dimensional subspace of 
$\cO_\chi(G(\cA))$. In this section we describe an example 
where the space $\cO_\chi(G(\cA))$ is infinite dimensional. 

Throughout this section, we fix a commutative unital 
Banach algebra $\cA$ and a unital algebra homomorphism 
$\eta \: \cA \to \C$. Let $\cI := \ker \eta$. 
As we shall see below, an important ingredient in our construction 
is the space $T_\eta(\cA) := \cI/\oline{\cI^2}$, and we shall assume that 
this space is infinite dimensional. 

\begin{exs} (a) The simplest examples where $T_\eta(\cA)$ 
is infinite dimensional arises as follows. For a Banach space 
$E$, we consider the unital Banach algebra 
$$ \cA = E \oplus \C \quad \mbox{ with } \quad (v,\lambda)(w,\mu) := 
(\lambda w + \mu v, \lambda\mu), $$
and the homomorphism $\eta \: \cA \to \C, \eta(v,\lambda) := \lambda$. 
Then $\cI = E$ and $\cI^2 = \{0\}$, so that 
$T_\eta(\cA) \cong \cI$ is infinite dimensional if $E$ has this property. 

(b) An example which reminds more of an algebra of functions can be 
constructed as follows. 
Let $E$ be a Banach space and 
$\cA$ be the algebra of all analytic functions 
$f = \sum_{n = 0}^\infty f_n$ ($f_n$  homogeneous of degree $n$) with 
$$ \|f\| := \sum_{n = 0}^\infty \|f_n\| < \infty, $$
where $\|f_n\| := \sup_{\|v\|\leq 1} \|f_n(v)\|$, so that $\cA$ 
can be conidered as an algebra of functions on the closed unit ball 
of $E$. It is easy 
to verify that $\cA$ is a Banach algebra with respect to pointwise 
multiplication. Further $\eta(f) := f(0) = f_0$ 
defines a continuous homomorphism  to $\C$ and 
$$ \cI = \ker \eta = \{ f \in \cA \: f_0 = 0\} $$
implies that 
$$ \cI^2 \subeq \{ f \in \cA \: f_0 = f_1 = 0\}. $$ 
In particular, we obtain an injection 
$E' \into T_\eta(\cA)$, so that this space is infinite dimensional 
if $E$ is. 

(c) Another class of examples can be produced by considering for a unital 
commutative Banach algebra $\cB$ the Banach algebra 
$$ \cA := \Big\{ f = \sum_{n = 0}^\infty t^n f_n \in \cB[[t]] \: 
\sum_{n = 0}^\infty \|f_n\| < \infty, f_0 \in \C \1 \Big\} $$
of formal $\cB$-valued power series converging absolutely for 
$|t| \leq 1$. Then $\eta(f) := f(0) \in \C$ defines a 
homomorphism $\eta \: \cA \to \C$, for which 
$$ \cI = \Big\{ f \in \cA \: f = \sum_{n = 1}^\infty t^n f_n \Big\},
\quad  \cI^2 = \Big\{ f \in \cA \: f = \sum_{n = 2}^\infty t^n f_n \Big\}, $$ 
so that $T_\eta(\cA) \cong \cB$, as a Banach space. 
\end{exs}

\begin{defn} Let $E$ be a Banach space. 
A continuous linear functional $\delta \: \cA \to E$ is 
called an {\it $\eta$-derivation} if 
\begin{equation}
  \label{eq:etader}
 \delta(ab) = \eta(a) \delta(b) + \eta(b) \delta(a) \quad \mbox{ for } 
\quad a,b \in \cA. 
\end{equation}
We write $\Der_\eta(\cA,E)$ for the space of all $\eta$-derivations on~$\cA$. 
\end{defn}

\begin{rem}
Clearly, for $\delta \in \Der_\eta(\cA,E)$, 
the relation $\delta(\1) = 2 \delta(\1)$ leads to 
$\delta(\1) =0$, so that $\delta$ 
is determined by its values on the hyperplane ideal 
$\cI$ and \eqref{eq:etader} further implies that 
$\delta(\cI^2) = \{0\}$. Using the relation 
$$ ab - \eta(ab)\1 
\in \eta(a)(b- \eta(b)\1) + \eta(b) (a - \eta(a)\1) + \cI^2, $$
it is easy to see that, conversely, every continuous 
linear map $\alpha \in \cI \to E$ vanishing on $\cI^2$ 
defines an $\eta$-derivation via 
$$ \delta(a) := \alpha( a - \eta(a)\1). $$ 
This leads to an isomorphism of Banach spaces 
$$ \Der_\eta(\cA,E) \cong B(T_\eta(\cA), E) \cong 
\{ \alpha \in B(\cI,E)  \: \cI^2 \subeq \ker \alpha \}. $$
In this sense the map 
$$ \delta_u \: \cA \to T_\eta(\cA), \quad 
a \mapsto [a - \eta(a)\1] := (a - \eta(a)\1) + \oline{\cI^2} $$
is the universal $\eta$-derivation; every other $\eta$-derivation 
factors uniquely through~$\delta_u$. 
\end{rem}

For the construction of examples where 
$\cO_\chi(G(\cA))$ is infinite dimensional, we shall focus on the  
case where $\g = \fsl_2(\C)$ and $\chi$ is the square of a 
pullback character, i.e., $m = 2$ in the notation of 
Theorem~\ref{big}, resp., \eqref{eq:prodsl2}: 
$$ \chi = (\phi_\eta^G)^*\xi^2. $$

In this case the representation of $\SL_2(\C)$ in the 
space $\cO_{\xi^2}(\SL_2(\C))$ is equivalent to the adjoint 
representation of $\fsl_2(\C)$, so that the pullback by 
$\phi_\eta^G$ leads to an injection 
$$ \fsl_2(\C) \into \cO_\chi(G(\cA)), $$
where $G(\cA)$, resp., the quotient group $\SL_2(\cA)$ acts on 
this space by 
$$ g.x := \Ad(\phi_\eta^G(g))x. $$
Accordingly, we write $\fsl_2(\C)_\eta$ for the $G(\cA)$-module 
$\fsl_2(\C)$, endowed with this action. Using 
\cite[Prop.~2.13]{MNS09}, we see that 
the quotient module \break $\cO_\chi(G(\cA))/\fsl_2(\C)_\eta$ is trivial 
because the only eigenvalue of $\check\alpha$ on this space is zero.
We take this as a motivation to study $\g(\cA)$-modules $V$ which 
are extensions of a trivial module $W$ by $\fsl_2(\C)_\eta$: 
$$ \0 \to \fsl_2(\C)_\eta \into V \onto W \to \0. $$

As $\fsl_2(\C)$ is finite dimensional, the Hahn--Banach Theorem implies 
that any such module can be written as a direct sum 
$V = \fsl_2(\C)_\eta \oplus W$ of Banach spaces, and the 
action is given by 
$$ (x \otimes a).(y,w) = (\eta(a)[x,y] + f(w)(x \otimes a), 0), $$
where 
$$ f \: W \to Z^1(\fsl_2(\cA), \fsl_2(\C)_\eta) $$
is a continuous linear map with values in the Banach space 
of $1$-cocycles on $\fsl_2(\cA)$ with values in $\fsl_2(\C)$. 
We therefore have to analyze the space  
\break $Z^1(\fsl_2(\cA), \fsl_2(\C)_\eta)$. 
Averaging over the compact group corresponding to the subalgebra 
$\su_2(\C) \subeq \fsl_2(\C) \subeq \fsl_2(\cA)$, 
it follows that 
every cocycle \break 
$\beta \in Z^1(\fsl_2(\cA), \fsl_2(\C)_\eta)$ 
is cohomologous to an $\su_2(\C)$-equivariant one, and by 
complex linearity, it is even $\fsl_2(\C)$-equivariant. 
Since $\fsl_2(\C)$ is a simple $\fsl_2(\C)$-module, 
the description of $\fsl_2(\cA)$ as $\fsl_2(\C) \otimes \cA$ 
exhibits $\cA$ as a multiplicity space, and we conclude that 
any $\fsl_2(\C)$-equivariant map $\fsl_2(\cA) \to \fsl_2(\C)$ 
is of the form 
$$ \beta(x \otimes a) = d(a)x $$
for a continuous linear map $d \: \cA \to \C$. 
Now 
$$ \beta([x \otimes a, y \otimes b]) 
=  \beta([x,y] \otimes ab) = d(ab) [x,y] $$
and 
$$ (x \otimes a).\beta(y \otimes b) 
= (x \otimes a) d(b)y = \eta(a) d(b) [x,y] $$
imply that $\beta$ is a $1$-cocycle if and only if $d$ is an 
$\eta$-derivation. This shows that 
$$ Z^1(\fsl_2(\cA), \fsl_2(\C)_\eta)^{\fsl_2(\C)} \cong \Der_\eta(\cA,\C).$$

We therefore consider the Banach space  
$$ V := \fsl_2(\C)_\eta \oplus T_\eta(\cA)' 
\cong \fsl_2(\C)_\eta \oplus \Der_\eta(\cA,\C) $$ 
and observe that 
$$ (x \otimes a).(y, \alpha) := 
(\eta(a)[x,y] + \alpha(a)x, 0) $$
defines a continuous action of $\fsl_2(\cA)$ on $V$ because 
$f(\alpha)(x \otimes a) := \alpha(a)x$ 
defines a continuous linear map 
$$ f \: \Der_\eta(\cA,\C) \to Z^1(\fsl_2(\cA), \fsl_2(\C)). $$

Since the group $G(\cA) = \tilde\SL_2(\cA)_0$ is simply connected, 
this $\fsl_2(\cA)$-module structure integrates to a holomorphic 
representation $(\pi, V)$ of $G(\cA)$. It remains to show that 
$V$ injects into the space $\cO_\chi(G(\cA))$ for 
$\chi = (\phi^G_\eta)^*(\xi^2)$. 

In the following we recall the basis 
$$ h := \pmat{1& 0 \\ 0 & -1}, \quad 
e := \pmat{0& 1 \\ 0 & 0} \quad \mbox{ and } \quad 
f := \pmat{0& 0 \\ 1 & 0} $$ 
of $\fsl_2(\C)$ with the relations 
$$ [e,f] = h, \quad [h,e] = 2e \quad \mbox{ and } \quad [h,f] = -2f. $$
We write $(h^*,e^*,f^*)$ for the corresponding dual basis of 
$\fsl_2(\C)^*$ and recall the fundamental character 
$$ \xi \: P(\C) \to \C^\times, \quad 
\xi\Big(\pmat{a & b \\ 0 & a^{-1}}\Big) := a^{-1}. $$

\begin{lem}\label{lem:emb} 
For $\chi = (\phi^G_\eta)^*(\xi^2)$, the continuous 
linear functional $\mu \in V'$, defined by 
$$ \mu(x,\alpha) := f^*(x) $$ 
defines an embedding 
$$ \mu_G \: V \to \cO_\chi(G), \quad 
\mu_G(v)(g) := \mu(g^{-1}.v) $$
of holomorphic Banach $G(\cA)$-modules. 
\end{lem}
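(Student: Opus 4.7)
The plan is to verify in succession that (i) each $\mu_G(v)$ actually lies in $\cO_\chi(G(\cA))$; (ii) $\mu_G$ intertwines the $G(\cA)$-actions; (iii) $\mu_G$ is continuous; and (iv) $\mu_G$ is injective. Of these, (i) and (iv) carry the real content.

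For (i), holomorphy of $\mu_G(v)$ is immediate since $g\mapsto\pi(g^{-1})v$ is a holomorphic map $G(\cA)\to V$ and $\mu\in V'$. The $P(\cA)$-equivariance $\mu_G(v)(gp)=\chi(p)^{-1}\mu_G(v)(g)$ is equivalent to $\mu(\pi(p).w)=\chi(p)\mu(w)$ for all $p\in P(\cA)$, $w\in V$, which by connectedness of $P(\cA)$ reduces to the infinitesimal identity
\[
\mu\bigl((x\otimes a).(y,\alpha)\bigr)=\dd\chi(x\otimes a)\,f^*(y)
\qquad \text{for } x\otimes a\in\fp(\cA),\ (y,\alpha)\in V.
\]
From $\xi(\exp(th))=e^{-t}$ one reads off $\dd\chi(h\otimes a)=-2\eta(a)$ and $\dd\chi(e\otimes a)=0$. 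Using the formula for the $\fsl_2(\cA)$-action, the left side equals $\eta(a)f^*([x,y])+\alpha(a)f^*(x)$; for $x=e$ both summands vanish since $f^*([e,\cdot])\equiv 0$ on $\{h,e,f\}$ and $f^*(e)=0$, while for $x=h$ the second summand vanishes as $f^*(h)=0$, and the identity $f^*([h,y])=-2f^*(y)$ (verified case by case on $y\in\{h,e,f\}$ using $[h,f]=-2f$, $[h,h]=0$, $[h,e]=2e$) matches the right side.

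For (ii), a direct substitution gives $\mu_G(g_0.v)(g)=\mu\bigl((g_0^{-1}g)^{-1}.v\bigr)=\mu_G(v)(g_0^{-1}g)$, which is exactly the left-translation action of $g_0$ on $\cO_\chi(G(\cA))$. For (iii), $\mu_G$ is linear between Banach spaces (the target being Banach by \cite{MNS09}) and has closed graph since the point evaluations on $\cO_\chi(G(\cA))$ are continuous; hence it is bounded by the Closed Graph Theorem.

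For (iv), assume $\mu_G(v)=0$ with $v=(y,\alpha)$ and write $y=y_h h+y_e e+y_f f$. Evaluation at $g=\1$ yields $y_f=f^*(y)=0$. Differentiating $t\mapsto\mu_G(v)(\exp(-t(x\otimes a)))$ at $t=0$ shows $\mu((x\otimes a).v)=0$, i.e.\
\[
\eta(a)f^*([x,y])+\alpha(a)f^*(x)=0\qquad\text{for all } x\in\fsl_2(\C),\ a\in\cA.
\]
Taking $x=f$ and using $[f,y]=2y_h f-y_e h$ reduces this to $2y_h\eta(a)+\alpha(a)=0$; setting $a=\1$ and invoking $\alpha(\1)=0$ (which holds because $\alpha$ is an $\eta$-derivation) gives $y_h=0$, after which $\alpha\equiv 0$. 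The main obstacle is the last step, since first-order derivations along $\fp(\cA)$ cannot detect $y_e$: one has to ascend to second order in the enveloping algebra. Differentiating twice along $\exp(t(f\otimes\1))$ and using $(f\otimes\1)^2.(y_e e,0)=(-2y_e f,0)$ produces $\mu((f\otimes\1)^2.v)=-2y_e=0$, so $y_e=0$ and $v=0$.
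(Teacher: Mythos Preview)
Your proof is correct and follows essentially the same route as the paper's: both reduce the $P(\cA)$-equivariance of $\mu$ to the infinitesimal identity on $x\in\{h,e\}$ via the same bracket computations, and both establish injectivity by evaluating at $\1$, applying first-order derivatives along $f\otimes a$ to kill $y_h$ and $\alpha$, and then a second-order derivative along $f\otimes\1$ to kill $y_e$. The only packaging difference is that the paper invokes \cite[Thm.~A.6]{MNS09} to reduce the claim to ``$\mu$ is $P(\cA)$-equivariant and $G(\cA)$-cyclic'' (which absorbs your continuity step (iii) and intertwining step (ii)), whereas you verify these properties by hand, including a Closed Graph argument for continuity.
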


\begin{prf} In view of \cite[Thm.~A.6]{MNS09}, we have 
to show that $\mu$ is $P(\cA)$ equivariant if the action on 
$\C$ is defined by $\chi$, and that $\mu$ is $G(\cA)$-cyclic, i.e., 
that $\mu(G(\cA).v) = \{0\}$ implies $v =0$. 

First we verify the equivariance. 
For $x \otimes a \in \fp(\cA)$ with $x = \gamma h + \beta e$, we have 
\begin{align*}
\mu((x \otimes a).(y,\alpha)) 
&= \mu(\eta(a)[x,y] + \alpha(a)x,0)
= \eta(a)f^*([x,y]) 
= \gamma\eta(a)f^*([h,y]) \\
&= -2\gamma \eta(a) f^*(y) 
= -2\gamma \eta(a) \mu(y,\alpha)
= 2 \L(\xi)(x)\eta(a) \mu(y,\alpha) \\ 
&= \L(\chi)(x \otimes a)\mu(y,\alpha), 
\end{align*}
so that the connectedness of $P(\cA)$ implies the equivariance 
of $\mu$. 

Next we show that $\mu$ is cyclic. 
Let $v = (y, \alpha) \in V$ be such that 
$\mu(G(\cA).v) =\{0\}$. By taking first order derivatives, 
we obtain $\mu(\fsl_2(\cA).v) = \{0\}$, which 
implies in particular that, for each $a \in \cA$, we have  
\begin{align*}
0 &= \mu(f \otimes a.(y,\alpha)) 
= \mu(\eta(a)[f,y] + \alpha(a)f,0)\\
&= \eta(a)f^*([f,y]) + \alpha(a) f^*(f)
= \eta(a)f^*([f,y]) + \alpha(a). 
\end{align*}
For $a =\1$ this leads to $f^*([f,y]) = 0$, so that 
$y \in \Spann \{f,e\}$. For $a \in \cI$ we further obtain 
$\alpha = 0$. Now $0= \mu(y,\alpha) =f^*(y)$ implies that 
$y = \lambda e$ for some $\lambda \in \C$. 
We also have 
\begin{align*}
0&= \mu((f \otimes a)^2.(y,0)) 
= \eta(a)\mu(\eta(a)[f,[f,y]],0)
= \eta(a)^2 f^*([f,[f,y]]) \\
&= \lambda \eta(a)^2 f^*([f,-h]) 
= (-2)\lambda \eta(a)^2, 
\end{align*}
which leads to $\lambda = 0$, and hence to $(y,\alpha) =0$. 
This proves that $\mu$ is cyclic. 
\end{prf}

The preceding lemma implies in particular that 
$\cO_\chi(G(\cA))$ is infinite dimensional if 
$T_\eta(\cA)$ has this property. This completes the proof of: 

\begin{prop} Let $\cA$ be a unital commutative Banach algebra 
and $\chi \in \hat\cA$ with kernel $\cI$ such that 
$T_\chi(\cA) = \cI/\oline{\cI^2}$ 
is infinite dimensional. 
Let $G := \tilde\SL_2(\cA)_0$ and 
$\phi_G^\eta \: G\to \SL_2(\C)$ denote the group 
homomorphism induced by $\eta$. Then, for the holomorphic character 
$$\xi\Big(\pmat{a & b \\ 0 & a^{-1}}\Big) := a^{-1}$$ 
of $P(\C),$ 
and the pullback of its square 
$\chi := (\phi_\eta^G)^*(\xi^2) \: P(\cA) \to \C^\times,$
the Banach space $\cO_\chi(G)$ is infinite dimensional. 
\end{prop}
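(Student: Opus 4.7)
The plan is to build an infinite-dimensional $G(\cA)$-invariant subspace of $\cO_\chi(G)$ by embedding into it a Banach space that contains a copy of $T_\eta(\cA)'$, which under the hypothesis is infinite dimensional by Hahn--Banach. All the preparatory ingredients have been developed above in this section, so the argument reduces to assembling them.

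First I would form the Banach space
$$ V := \fsl_2(\C) \oplus \Der_\eta(\cA,\C) \;\cong\; \fsl_2(\C) \oplus T_\eta(\cA)' $$
and put on it the continuous action
$$ (x \otimes a).(y,\alpha) := \bigl(\eta(a)[x,y] + \alpha(a)\,x,\; 0\bigr). $$
A short verification, using only the derivation identity $\alpha(ab) = \eta(a)\alpha(b) + \eta(b)\alpha(a)$, or equivalently the cocycle identity for $\beta(x \otimes a) := \alpha(a) x \in Z^1(\fsl_2(\cA), \fsl_2(\C)_\eta)$, shows that this is a continuous representation of $\fsl_2(\cA)$. Since $G := \tilde\SL_2(\cA)_0$ is simply connected, the action integrates to a holomorphic $G$-representation $(\pi,V)$.

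Next I would apply Lemma~\ref{lem:emb} to embed $V$ into $\cO_\chi(G)$ through the orbit map associated with the functional $\mu(x,\alpha) := f^*(x)$. The lemma requires two things: that $\mu$ is $P(\cA)$-equivariant for $\chi$, and that it is $G$-cyclic. The first reduces to the computation that $f^*$ has $\check\alpha$-eigenvalue $-2$, which matches $\L(\chi)(\check\alpha \otimes a) = 2\L(\xi)(\check\alpha)\eta(a) = -2\eta(a)$ precisely because $\chi$ is the pullback of the \emph{square} $\xi^2$; this squaring is where the construction genuinely uses the hypothesis $m=2$. Cyclicity is established by expanding $\mu(\pi(\exp(t\,f \otimes a))\,v) = 0$ to first and second order in $t$ and reading off successively that $\alpha = 0$ and that the $\fsl_2(\C)$-component of $v$ vanishes.

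Finally, I would conclude by composing the obvious injection $T_\eta(\cA)' \hookrightarrow V$, $\alpha \mapsto (0,\alpha)$, with the embedding $\mu_G \: V \hookrightarrow \cO_\chi(G)$ supplied by Lemma~\ref{lem:emb}, forcing $\dim \cO_\chi(G) = \infty$ under the standing hypothesis. I expect the only step that truly requires care to be the equivariance check inside Lemma~\ref{lem:emb}, where the squaring of $\xi$ is essential; everything else is book-keeping around the derivation identity and the Schur-type reduction $Z^1(\fsl_2(\cA), \fsl_2(\C)_\eta)^{\fsl_2(\C)} \cong \Der_\eta(\cA,\C)$ developed earlier in the section.
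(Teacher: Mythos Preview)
Your proposal is correct and follows exactly the paper's approach: the paper constructs the same module $V = \fsl_2(\C)_\eta \oplus \Der_\eta(\cA,\C)$ with the same action, integrates it to $G(\cA)$, and then invokes Lemma~\ref{lem:emb} to embed $V$ into $\cO_\chi(G(\cA))$, concluding that the latter is infinite dimensional whenever $T_\eta(\cA)$ is. Your recap of the equivariance and cyclicity verifications inside Lemma~\ref{lem:emb} also matches the paper's computations.
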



\begin{thebibliography}{999999}

\bibitem[Ar87]{Ar87} Arveson, W., {\it Nonlinear states on 
$C^*$-algebras}, in ``Operator algebras and mathematical physics 
(Iowa City, Iowa, 1985),''  283--343; 
Contemp. Math. {\bf 62}, Amer. Math. Soc., Providence, RI, 1987


\bibitem[CP01]{CP01} Chari, V., and A. Pressley, 
{\it Weyl modules for classical 
and quantum affine algebras}, Represent. Theory {\bf 5} (2001), 191--223 

\bibitem[FL04]{FL04} Feigin, B. I., and S. Loktev, 
{\it Multi dimensional Weyl Modules and Symmetric Functions}, 
Commun. Math. Phys. {\bf 251} (2004), 427--445 

\bibitem[FoLi07]{FoLi07} Fourier, G., and Littelmann, P., 
{\it Weyl modules, Demazure modules, KR-modules,
crystals, fusion products and limit constructions},
 Adv. Math. {\bf 211:2} (2007), 566--593

\bibitem[Mais62]{Mais62} Maissen, B., {\it Lie-Gruppen mit Banachr\"aumen als
Parameterr\"aume}, Acta Math. {\bf 108} (1962), 229--269 

\bibitem[MNS09]{MNS09} 
M\"uller, C., K.-H. Neeb, and H. Sepp\"anen, 
{\it Borel--Weil Theory for Root Graded Banach--Lie groups},
Int. Math. Res.  Notices, to appear; arxiv:math.RT.0903.1188

\bibitem[Ne98]{Ne98} Neeb, K.-H., {\it Holomorphic highest weight representations
of infinite dimensional complex classical groups}, 
J.\ Reine Angew. Math.\ {\bf 497} (1998), 171--222  

\bibitem[Ne01]{Ne01} ---, {\it Borel--Weil theory for loop groups}, in 
``Infinite Dimensional K\"ahler Manifolds'', 
 Eds. A. Huckleberry, T. Wurzbacher, DMV-Seminar {\bf 31}, 
Birkh\"auser Verlag, 2001; 179--229  

\bibitem[Ne06]{Ne06} ---, {\it Towards a {L}ie theory of locally convex groups},
{Jap. J. Math.} {\bf 1:2} (2006), 291--468

\bibitem[NS09]{NS09} Neeb, K.-H., and H. Sepp\"anen, 
{\it Borel-Weil Theory for finite dimensional root graded Lie groups}, 
in preparation 

\bibitem[NSS09]{NSS09} Neher, E., Savage, A., and P. Senesi, 
{\it Irreducible finite-dimensional representations of equivariant 
map algebras}, Preprint, 2009,  arXiv:math.RT.0906.5189 

\bibitem[PS86]{PS86} Pressley, A., and G. Segal, ``Loop Groups," Oxford University Press, 
Oxford, 1986

\bibitem[Ru91]{Ru91} 
Rudin, W., ``Functional Analysis,'' 
{International Series in Pure and Applied Mathematics},
McGraw-Hill Inc., New York, 1991
 
\bibitem[Se09]{Se09} 
Senesi, P., {\it Finite dimensional representation theory of loop algebras: 
A survey}, arXiv:math.RT.0906.0099v1 

\end{thebibliography}
\end{document}